                \newcommand{\R}{\mathbb R}
\newcommand{\D}{\partial}               
\newcommand{\s}{\sigma}
\newcommand{\sph}{\mathbb S}                 
     \def\a{\alpha}
               \newcommand{\A}{{\mathcal A}}  
            \newcommand{\RP}{{\mathbb R\mathrm P}}
\newcommand{\ind}{\mathrm{ind}}        \newcommand{\sign}{\mathrm{sign}}
\newcommand{\x}{{\tt x}}                \newcommand{\y}{{\tt y}}
\newcommand{\z}{{\tt z}}
\begin{document}

\captionsetup[figure]{labelfont={bf},labelformat={default},labelsep=period,name={\small Fig.}}


\numberwithin{equation}{section}                
\theoremstyle{plain}
\newtheorem{theorem}{\bf Theorem}
\newtheorem*{theorem*}{\bf Theorem}
\newtheorem{teorema}{\bf Theorem}
\def\theteorema{\Roman{teorema}}
\newtheorem{lemma}{\bf Lemma}
\newtheorem*{lemma*}{\bf Lemma}
\newtheorem*{mlemma}{\bf Main Lemma}

\newtheorem{proposition}{\bf Proposition}
\newtheorem*{proposition*}{\bf Proposition}
\newtheorem{corollary}{\bf Corollary}
\newtheorem*{corollary*}{\bf Corollary}
\newtheorem*{conjecture}{\bf Conjecture}
\newtheorem*{fact*}{\bf Fact}
\newtheorem{property}{\bf Property}


\theoremstyle{definition}
\newtheorem{definition}{\bf Definition}
\newtheorem*{definition*}{\bf Definition}

\newtheorem{example}{\bf Example}
\newtheorem*{example*}{\bf Example}
\newtheorem{Example}{\bf Example}[section]
\theoremstyle{remark}
\newtheorem*{remark*}{\small\sc Remark}
\newtheorem{remark}{\bf Remark}
\newtheorem*{problem}{\bf Problem}


\title{On Projective Umbilics: a Geometric Invariant and an Index}

\author{Ricardo \sc Uribe-Vargas \\ {\small Published in Journal of Singularities \textbf{17} (2018), 81-90. DOI: 10.5427/jsing.2018.17e}}
\date\empty                     
\maketitle

\noindent 
\textbf{Abstract}\,
  We define a geometric invariant and an index ($+1$ or $-1$)
  for projective umbilics of smooth surfaces. We prove that {\itshape the sum of the
    indices of the projective umbilics inside a connected component $H$ of the hyperbolic domain remains
    constant in any $1$-parameter family of surfaces if the topological type of $H$
    does not change.} We prove the same statement for any connected component $E$ of the elliptic domain. 
We give formulas for the invariant and for the index which do not depend on any normal form. 
\smallskip

{\footnotesize
\noindent 
\textbf{Keywords}. Differential geometry, surface, singularity,
parabolic curve, flecnodal curve, projective umbilic, invariant, index, cross-ratio, quadratic point.
}

{\footnotesize
\noindent 
\textbf{MSC}. 53A20, 53A55, 53D10, 57R45, 58K05
}


\section{Introduction}

The points at which a smooth plane curve has higher contact with its osculating circle (\textit{vertices})
or with its osculating conic (\textit{sextactic points}) have been extensively studied. Recent
works relate this classical subject of differential geometry to singularity theory, and to contact and
symplectic geometry (cf. \cite{Arnold-RPCS, Arnold-TPTAC, Chekanov-Pushkar, Goryunov84, Dima, Uribeinvariant, Toru}).

For a smooth surface of $\R^3$ the analogues of the vertices, called  \textit{umbilics}, are the points
at which a sphere has higher contact (than usual) with the surface.

\textit{Projective umbilics} are the analogous of sextactic points for smooth surfaces of $\RP^3$:
the points where the surface is approximated by a quadric up to order
$3$. (Projective umbilics are also called \textit{quadratic points} (see \cite{Ovsienko-Tabachnikov}).) 

Although projective umbilics have been studied in classical literature (cf. \cite{Cayley, Salmon, Wilczynski}),
the few known global results about them are rather recent. 
Let us mention two examples\,: 
{\itshape a convex smooth surface of $\RP^3$ has at least $6$ projective umbilics} \cite{Dima};
{\itshape if a generic smooth surface of $\RP^3$ contains a hyperbolic
  disc bounded by a Jordan parabolic curve, then there exists an odd number of projective umbilics inside this
  disc} (and hence at least one) \cite{Uribeinvariant}. 
We mention other recent results in Section \ref{remarks}. 

Tabachnikov and Ovsienko stated a (still open) conjecture \cite{Ovsienko-Tabachnikov}:
{\itshape the least number of projective umbilics on a generic compact smooth hyperbolic surface is 
$8$.}\footnote{An analogue conjecture, but for surfaces of $\sph^3$, was named 
``\textit{Ovsienko-Tabachnikov conjecture}'' 
in \cite{Garcia-Freitas} and negatively solved. However, the original 
Ovsienko-Tabachnikov conjecture was not solved in \cite{Garcia-Freitas}.}
\medskip

\noindent
\textbf{\small Hyperbonodes - Ellipnodes}. 
To make it short, we shall call \textit{hyperbonodes} the hyperbolic projective umbilics and \textit{ellipnodes}
the elliptic ones.
\medskip

To present our results, we will describe some features of generic smooth surfaces of
$\RP^3$ (and of $\R^3$) and will give another characterisation of hyperbonodes:  
\smallskip

\noindent
\textbf{\small Contact with lines}. 
The points of a surface in $3$-space are characterised by \textit{asymptotic lines}
(tangent lines of the surface with more than $2$-point contact).
A point is called hyperbolic (resp. elliptic) if there is two distinct asymptotic lines
(resp. no asymptotic line). The \textit{parabolic curve} consists of the points where
there is a unique (but double) asymptotic line. The \textit{flecnodal curve} is the locus of
hyperbolic points where an asymptotic line admits more than $3$-point contact with the surface.

We distinguish two branches of the flecnodal curve\,: ``left'' and ``right'', defined below.  
We represent the right branch of the flecnodal curve and the right asymptotic lines in black, 
and the left branch and asymptotic lines in white. The elliptic domain is represented in white. 

A \textit{hyperbonode} is a point of intersection of the left and right branches of the flecnodal
curve (Fig.\ref{hyperbonode}).
A \textit{biflecnode} is a point of the flecnodal curve at which one asymptotic
tangent line has at least $5$-point contact with $S$ - {\itshape at a biflecnode that
  asymptotic line is tangent to the corresponding flecnodal curve}
(Fig.\ref{biflecnodes}).

\begin{figure}[!ht]
\centering
\begin{minipage}[t]{5cm}
\centering
\includegraphics [scale=0.38]{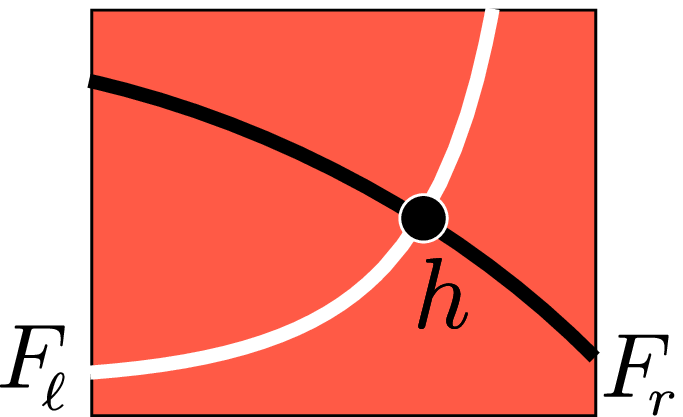}
\caption{\small A hyperbonode.}
\label{hyperbonode}
\end{minipage}
\hspace{1cm}
\begin{minipage}[t]{6cm}
\includegraphics [scale=0.38]{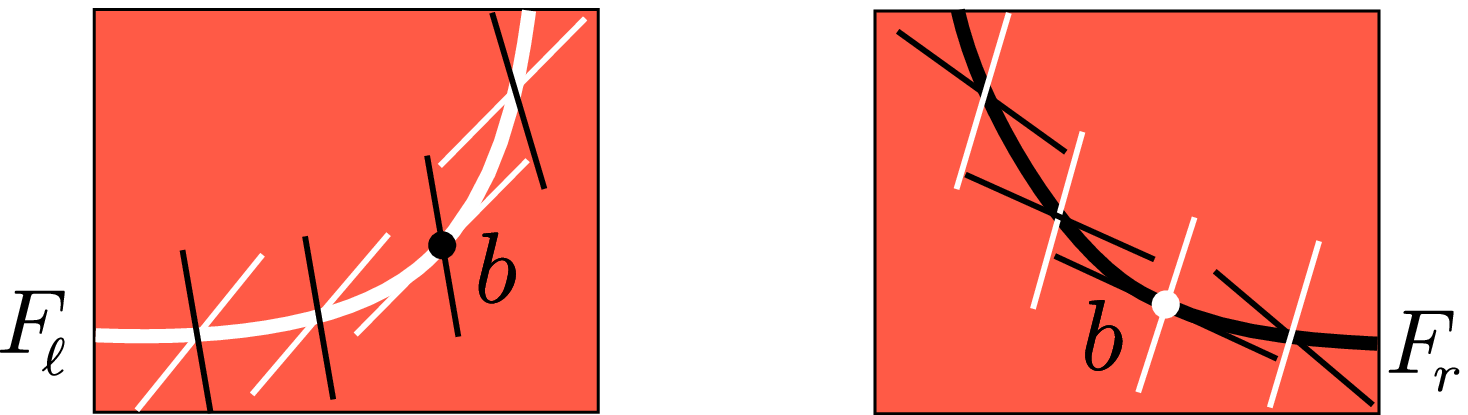}
\caption{\small Left and right biflecnodes.}
\label{biflecnodes}
\end{minipage}
\end{figure}

Ellipnodes and hyperbonodes are crucial in the metamorphosis of the parabolic curve of
evolving smooth surfaces, and in the metamorphosis of wave fronts occurring in generic
$1$-parameter families \cite{Uribeevolution}\,:
{\itshape any Morse metamorphosis of the parabolic curve takes place at an ellipnode (or hyperbonode)
  which is replaced by a hyperbonode (resp. by an ellipnode)} - Fig.\,\ref{metamorphosis}-left. 
    
  {\itshape Any $A_3$-metamorphosis of a wave front, where two swallowtail points are
    born or die, take place at an ellipnode (or hyperbonode) which is replaced by a
    hyperbonode (resp. by an ellipnode)} - Fig.\ref{metamorphosis}-right. 

\begin{figure}[h] 
\centering
\includegraphics[scale=0.4]{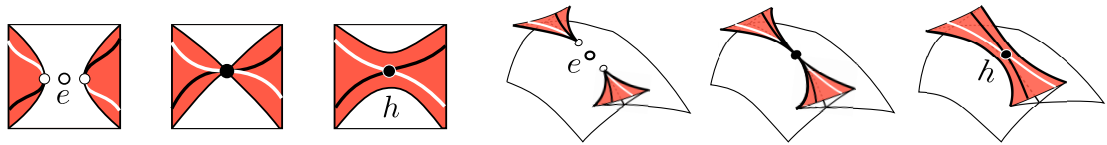}
\caption{\small Examples where an ellipnode is replaced by a hyperbonode or vice-versa.}
\label{metamorphosis}
\end{figure}

In this paper we introduce a geometric invariant for hyperbonodes as the cross-ratio determined by the
asymptotic lines and the tangents to the two branches of the flecnodal curve (it is in fact
a contact invariant).
This invariant describes some basic features of hyperbonodes and its sign distinguishes
the hyperbonodes that are born or die at a creation/annihilation transition.
We use this invariant to introduce an index for hyperbonodes, with values $+1$ or $-1$. Then we prove that
{\itshape the sum of the indices of the hyperbonodes inside a connected component $H$ of the hyperbolic
  domain remains constant in any $1$-parameter family of surfaces, provided that the topological type of $H$ does
not change} (Theorem\,\ref{index-invariance}). A similar theorem is proved for ellipnodes. 

We provide formulas for the invariant and for the index of a hyperbonode which do not depend on any
normal form (Theorems\,\ref{formula-invariant} and \ref{formula-index}).

\section{Cr-Invariant: Definition and Basic Properties}

An \textit{asymptotic curve} is an integral curve of a field of asymptotic directions.
\medskip

\noindent 
\textbf{\small Left and right asymptotic and flecnodal curves}. Fix an orientation in the
projective space $\RP^3$ (or in $\R^3$). The two asymptotic curves passing through a point of the
hyperbolic domain of a generic smooth surface are distinguished: one twists like a left screw
and the other like a right screw. More
precisely, a regularly parametrised smooth curve is said to be a \textit{left} (\textit{right})
\textit{curve} if its first three derivatives at each point form a negative (resp. a positive) frame.
\medskip

We need the following basic, but important, fact on surfaces (cf. \cite{Uribeinvariant}):  
\begin{fact*}
At each hyperbolic point one asymptotic curve is left and the other is right.
\end{fact*}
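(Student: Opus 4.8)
The plan is to reduce to a local computation in a Monge normal form and to recognise the left/right character of an asymptotic curve as the sign of its torsion. Fix an orientation-preserving affine chart of $\RP^{3}$ (or of $\R^{3}$) in which the given hyperbolic point is the origin, the tangent plane of the surface $S$ is $\{z=0\}$, and $S$ is the graph $z=f(x,y)$ with $f(0,0)=0$ and $df(0,0)=0$. Since the point is hyperbolic the Hessian of $f$ at the origin is nondegenerate of signature $(1,1)$, so after a linear change of the $(x,y)$-coordinates and a positive rescaling of $z$ we may assume
\[
  f(x,y)=xy+ax^{3}+bx^{2}y+cxy^{2}+dy^{3}+O(4).
\]
The two asymptotic directions at the origin are then the $x$- and $y$-axes, and the two asymptotic curves $\gamma_{1},\gamma_{2}$ through the origin are the integral curves, tangent respectively to those axes, of the null-direction field of the second fundamental form $f_{xx}\,dx^{2}+2f_{xy}\,dx\,dy+f_{yy}\,dy^{2}$.

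First I would compute $\gamma_{1}$ as a graph $y=y_{1}(x)$ with $y_{1}(0)=y_{1}'(0)=0$: substituting into the asymptotic equation $f_{xx}+2f_{xy}\,y_{1}'+f_{yy}\,(y_{1}')^{2}=0$ and keeping the lowest-order terms gives $y_{1}'(x)=-3ax+O(x^{2})$, hence $y_{1}(x)=-\tfrac{3a}{2}x^{2}+O(x^{3})$ and $z_{1}(x)=f(x,y_{1}(x))=-\tfrac{a}{2}x^{3}+O(x^{4})$. Parametrising $\gamma_{1}(t)=(t,y_{1}(t),z_{1}(t))$ one reads off $\gamma_{1}'(0)=(1,0,0)$, $\gamma_{1}''(0)=(0,-3a,0)$, $\gamma_{1}'''(0)=(0,\ast,-3a)$, whence $\det(\gamma_{1}',\gamma_{1}'',\gamma_{1}''')(0)=9a^{2}$. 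The same computation with $x$ and $y$ interchanged yields $x_{2}(y)=-\tfrac{3d}{2}y^{2}+O(y^{3})$, $z_{2}(y)=-\tfrac{d}{2}y^{3}+O(y^{4})$, and for $\gamma_{2}(s)=(x_{2}(s),s,z_{2}(s))$ one gets $\det(\gamma_{2}',\gamma_{2}'',\gamma_{2}''')(0)=-9d^{2}$ (the extra minus sign because exchanging $x$ and $y$ reverses orientation). At a hyperbolic point away from the flecnodal curve one has $a\neq0$ and $d\neq0$, so one of these determinants is positive and the other negative; that is, one asymptotic curve is a right curve and the other a left curve. This is the Beltrami--Enneper phenomenon: along an asymptotic curve the torsion satisfies $\tau^{2}=-K$, so $\tau\neq0$ at a hyperbolic point and the two branches realise the two opposite square roots of $-K$.

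Two routine checks finish the argument. A $C^{3}$ change of parameter $t=\f(s)$ with $\f'\neq0$ multiplies the Wronskian $\det(\gamma',\gamma'',\gamma''')$ by $(\f')^{6}>0$ at any point where the $3$-jet is a frame, so the sign---hence the left/right label---is independent of the parametrisation of the asymptotic curves. And the reduction to the normal form above is the composition of an orientation-preserving ambient chart with a transformation of the form $(x,y,z)\mapsto(B(x,y),\l z)$ with $B\in\mathrm{GL}_{2}(\R)$ and $\l>0$; when $\det B<0$ this interchanges $\gamma_{1}$ and $\gamma_{2}$, but the statement ``one left and one right'' is symmetric in the two curves, so only the labelling of which curve is which depends on the chosen ambient orientation.

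The step I expect to be the genuine obstacle is the degenerate locus. On the flecnodal curve exactly one of $a,d$ vanishes---this is precisely the condition that the corresponding asymptotic line have more than $3$-point contact---and at a hyperbonode both vanish; in either case the relevant asymptotic curve has $\det(\gamma',\gamma'',\gamma''')=0$ at that point and so is, by the stated definition, neither left nor right there. Hence the Fact is literally valid on the complement of the flecnodal curve, and at a flecnodal point it should be understood as a statement about the germ of each asymptotic curve away from that point, obtained by continuity of $\sign\tau$ along it; I would cite \cite{Uribeinvariant} for this last point rather than re-prove it.
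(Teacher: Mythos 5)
Your argument is correct, but note that the paper itself offers no proof of this Fact: it is simply quoted from \cite{Uribeinvariant}. So what you have written is a self-contained substitute rather than a variant of an argument in the text. Your route is the standard one and it checks out: after reducing to the Monge form $z=xy+ax^{3}+bx^{2}y+cxy^{2}+dy^{3}+O(4)$, the determinants of the $3$-frames of the two asymptotic curves at the origin are exactly $9a^{2}$ and $-9d^{2}$ (i.e. $\tfrac14 f_{30}^{2}$ and $-\tfrac14 f_{03}^{2}$ in the paper's notation) --- I verified that your values $y_1''(0)=-3a$, $z_1''(0)=0$, $z_1'''(0)=-3a$ are exact, not merely leading-order, so the sign conclusion needs no genericity beyond $a\neq0$, $d\neq0$; this is precisely the Beltrami--Enneper statement that the two asymptotic branches carry torsions $\pm\sqrt{-K}$, which is how the fact is established in \cite{Uribeinvariant}. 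Your treatment of the two delicate points is also the right one: (i) invariance of the left/right label under reparametrisation (factor $(\f')^{6}$) and the observation that an orientation-reversing reduction only swaps the two labels, which the symmetric statement tolerates; and (ii) the degenerate locus, since on the flecnodal curve the corresponding curve has $\gamma''(0)=0$, the pointwise frame degenerates, and the paper's blanket ``at each hyperbolic point'' silently uses the convention that the label is assigned off the flecnodal curve and extended by continuity --- exactly as you say, and consistent with how the left/right flecnodal branches are defined. In short: correct, complete at the level of detail one could ask for, and more explicit than the source the paper relies on; the only thing to add would be a one-line remark that the affine chart can be chosen compatibly with the fixed orientation of $\RP^{3}$, which you implicitly use.
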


The respective tangent lines $L_\ell$, $L_r$ are called \textit{left} and \textit{right asymptotic lines}.  
\medskip

\textit{This left-right distinction does not depend on the orientability of the surface.} 

\begin{definition*}
The \textit{left} (\textit{right}) \textit{flecnodal curve} $F_\ell$ (resp. $F_r$) of a surface $S$ consists of
the points for which the left (resp. right) asymptotic line is over-osculating.
\end{definition*}

\begin{remark*}
  The transverse intersections of the left and right flecnodal curves (that is, the hyperbonodes)
  are stable; but the self-intersections of one branch of the flecnodal curve (left or right) are
  unstable \cite{Uribeevolution}. 
\end{remark*}

\noindent
\textbf{\small The projective invariant}. At a hyperbonode the two asymptotic lines $L_\ell$, $L_r$ and 
the tangent lines to the left and right flecnodal curves, which we note $L_{F_\ell}$ and $L_{F_r}$, 
determine a cross-ratio, which is a projective invariant\,:  
\medskip

\noindent
\textbf{\small Cr-invariant}. 
  The \textit{cr-invariant} $\rho(h)$ of a hyperbonode (hyperbolic projective umbilic) is the cross-ratio of the
  lines $L_{F_\ell}$, $L_r$, $L_{F_r}$, $L_\ell$\,:
  \[\rho(h):=(L_{F_\ell}, L_r, L_{F_r}, L_\ell).\] 

\begin{property}
  The cr-invariant does not depend on the chosen orientation of $\RP^3$.
  
  {\rm Indeed, a change of orientation permutes left and right, but
    $(L_{F_r}, L_\ell, L_{F_\ell}, L_r)=(L_{F_\ell}, L_r, L_{F_r}, L_\ell)$.}
\end{property}

\begin{property}
  By its definition, the value of $\rho$ is negative if $F_\ell$ and $L_r$
  locally separate $F_r$ from $L_\ell$ {\rm (fig.\,\ref{sign-invariant}).}
\end{property}

\begin{figure}[h] 
\centering
\includegraphics[scale=0.39]{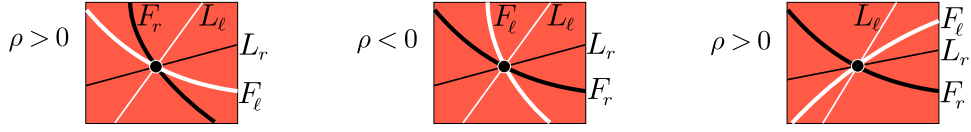}
\caption{\small Some generic positions of the asymptotic lines and flecnodal curves.}
\label{sign-invariant}
\end{figure}

\begin{property}\label{sign-distinguishes}
The sign of the cr-invariant distinguishes the hyperbonodes that take part in
a ``creation/annihilation'' transition {\rm (Fig.\,\ref{inv-distinguish})}. 
\end{property}

It follows because near the creation/annihilation moment (of two hyperbonodes) both branches of
the flecnodal curve are transverse to the asymptotic lines (Fig.\,\ref{inv-distinguish}). 

\begin{figure}[!ht]
\centering
\includegraphics[scale=0.42]{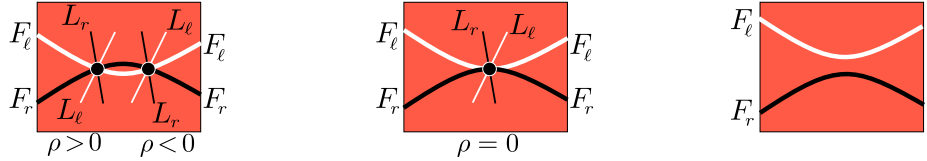}
\caption{\small A transition with ``creation/annihilation'' of two hyperbonodes.}
\label{inv-distinguish}
\end{figure}

\begin{property}
The sign of $\rho$ changes at 
a ``flec-hyperbonode'' transition {\rm (Fig.\,\ref{flec-hyperbonode})}. 
\end{property}

\begin{figure}[h] 
\centering
\includegraphics[scale=0.42]{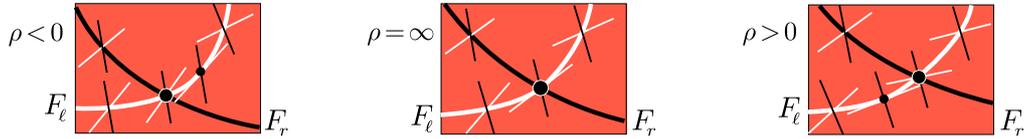}
\caption{\small A ``flec-hyperbonode'' transition where a biflecnode and a hyperbonode overlap.}
\label{flec-hyperbonode}
\end{figure}

To study the (unstable) hyperbonodes for which the cr-invariant change sign, $\rho=0$ and $\rho=\infty$,
one usually perturbs them inside a generic $1$-parameter family of smooth surfaces, where they become stable.
See Figs.\,\ref{inv-distinguish} and \ref{flec-hyperbonode}. 

\begin{property}\label{only-local-transitions}
  The only local transitions involving hyperbonodes, occurring inside the hyperbolic domain, correspond to
  the following values of $\rho$\,: 
\begin{enumerate}[label=\alph*), parsep=0.124cm, itemsep=0.0cm,topsep=0.2cm]\itemsep=0.0cm  
\item $F_\ell$ and $F_r$ are tangent $\iff$ $\rho=0$ (double hyperbonode)\,; 
\item $L_\ell$ is tangent to $F_\ell$ or $L_r$ is tangent to $F_r$ $\iff$ $\rho=\infty$ (flec-hyperbonode)\,.
\end{enumerate}
\end{property} 

\begin{remark*}
  The value $\rho=1$ corresponds to a non-generic hyperbonode where {\itshape $L_\ell$ is tangent to $F_r$
  or $L_r$ is tangent to $F_\ell$}, but this provides no relevant transition. 
\end{remark*}

\subsection{Some Expressions for the Invariant}\label{results}

\noindent
\textbf{\small Normal forms}. According to Landis-Platonova's Theorem \cite{Landis, Platonova} and
Ovsienko-Tabachnikov's Theorem \cite{Ovsienko-Tabachnikov}, the $4$-jet of a surface at a
hyperbonode can be sent by projective transformations to the
respective normal forms
\begin{equation*}\label{Landis normal form}
  z=xy+\frac{1}{3!}(ax^3y+bxy^3)+\frac{1}{4!}(x^4\pm y^4) \eqno(\text{L-P})
\end{equation*}
\begin{equation*}\label{Tabachnikov normal form}
  z=xy+\frac{1}{3!}(x^3y\pm xy^3)+\frac{1}{4!}(Ix^4+Jy^4)\eqno(\text{O-T})
\end{equation*}

\begin{remark*}
  These normal forms are equivalent for generic hyperbonodes.
  Indeed, writing $\x=x/u$, $\y=y/u$ and $\z=z/uv$ with $u^2=|a|$, $v^2=|b|$ we transform
  \[\z=\x\y+\frac{1}{3!}(a\x^3\y+b\x\y^3)+\frac{1}{4!}(\pm \x^4+\y^4) \quad \text{to} \quad
    z=xy\pm\frac{1}{3!}(x^3y\pm xy^3)+\frac{1}{4!}(Ix^4+Jy^4),\]
where $I=\pm |b/a^3|^{1/2}$ and $J=\pm |a/b^3|^{1/2}$ with the appropriate choice of signs. 

However, in (L-P) the biflecnodes are impossible, while in (O-T) the tangency of $L_r$ with $F_\ell$
(or of $L_\ell$ with $F_r$) is impossible. 
\end{remark*}

To encompass both normal forms we consider the
``prenormal'' form
\begin{equation*}\label{Ric-normal-form}
  z=xy+\frac{1}{3!}(ax^3y+bxy^3)+\frac{1}{4!}(Ix^4+Jy^4)\,.\eqno(\mathcal{H})
\end{equation*}

The following theorem implies that for $a=0$ ($b=0$) we get a hyperbonode where the left (resp. right)
asymptotic line is tangent to the right (resp. left) flecnodal curve; and for $I=0$ ($J=0$) we get a
hyperbonode where the right (resp. left) asymptotic line is tangent to the right  (resp. left) flecnodal
curve (i.e., a biflecnode overlaps with the hyperbonode - Fig.\,\ref{flec-hyperbonode}). 

\begin{theorem}
  Let $h$ be a hyperbonode, with cr-invariant $\rho$, of a generic smooth surface $S$.
  If we put the $4$-jet of $S$ (after projective transformations) in the ``prenormal'' form $(\mathcal{H})$, 
  then the resulting coefficients $a$, $b$, $I$, $J$ satisfy the relation
  \[1-\frac{ab}{IJ}=\rho\,.\]

  The respective relations satisfied by the normal forms {\rm (L-P)} and {\rm (O-T)} are 
\[1\mp ab=\rho \  \qquad \text{and} \ \qquad 1\mp\frac{1}{IJ}=\rho\,.\]
\end{theorem}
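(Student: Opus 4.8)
The plan is to read the four lines $L_\ell,L_r,L_{F_\ell},L_{F_r}$ off the prenormal form $(\mathcal H)$ and then evaluate the cross-ratio. Put $f(x,y)=xy+\frac1{3!}(ax^3y+bxy^3)+\frac1{4!}(Ix^4+Jy^4)+O(5)$. Since $f_x$ and $f_y$ vanish at the origin, the tangent plane of $S$ at the hyperbonode $h$ is $\{z=0\}$ and all four lines lie in it; a cross-ratio of four concurrent coplanar lines equals the cross-ratio of the four corresponding points of $\RP^1=\{(dx:dy)\}$, so it is enough to work with the affine slope $m=dy/dx$.

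First I would locate the asymptotic lines at $h$. From $f_{xx}=axy+\tfrac I2x^2+O(3)$, $f_{yy}=bxy+\tfrac J2y^2+O(3)$, $f_{xy}=1+O(2)$, the equation of asymptotic directions $f_{xx}\,dx^2+2f_{xy}\,dx\,dy+f_{yy}\,dy^2=0$ reduces at the origin to $dx\,dy=0$; hence $L_\ell$ and $L_r$ are the two coordinate axes. Which axis is left and which is right is dictated by the Fact quoted above, but this choice will not affect the final formula, because interchanging them is the substitution $x\leftrightarrow y$, $a\leftrightarrow b$, $I\leftrightarrow J$, under which $1-\tfrac{ab}{IJ}$ is invariant; so I fix $L_r$ to be the $x$-axis and $L_\ell$ the $y$-axis.

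Next I would find the tangent lines to the two flecnodal branches. Near $h$ the field of asymptotic directions splits into two smooth fields; write the one tangent to the $x$-axis as $v=(1,q(x,y))$. From $f_{xx}+2f_{xy}q+f_{yy}q^2=0$ and $f_{xy}(0)\ne0$ we get $q=O(|x,y|^2)$. A point lies on the branch $F_r$ iff the $x$-direction asymptotic line over-osculates there, that is, iff the graph of $f$ meets that (graph-linear) line to order $\ge 4$; this is exactly $\phi(x,y):=D^3_v f=f_{xxx}+3f_{xxy}\,q+3f_{xyy}\,q^2+f_{yyy}\,q^3=0$. Because $q=O(2)$ and $f_{xxy},f_{xyy},f_{yyy}$ vanish at the origin, only $f_{xxx}=Ix+ay+O(2)$ contributes to the linear part of $\phi$, so (for a generic hyperbonode) $L_{F_r}=\{Ix+ay=0\}$, and by the $x\leftrightarrow y$ symmetry $L_{F_\ell}=\{bx+Jy=0\}$. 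This already yields the statements preceding the theorem: $a=0$ turns $L_{F_r}$ into the $y$-axis $=L_\ell$, $I=0$ turns it into the $x$-axis $=L_r$ (a biflecnode), and symmetrically for $b,J$.

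Finally I would compute the cross-ratio. In the slope coordinate the ordered quadruple $L_{F_\ell},L_r,L_{F_r},L_\ell$ has slopes $z_1=-b/J$, $z_2=0$, $z_3=-I/a$, $z_4=\infty$. With the cross-ratio normalised by $(z_1,z_2,z_3,z_4)=\frac{(z_1-z_3)(z_2-z_4)}{(z_1-z_4)(z_2-z_3)}$ --- the convention forced by Property \ref{only-local-transitions} and by the orientation-independence of $\rho$ --- its value at $z_4=\infty$ is $\frac{z_1-z_3}{z_2-z_3}$, whence
\[
\rho=(L_{F_\ell},L_r,L_{F_r},L_\ell)=\frac{-b/J+I/a}{I/a}=1-\frac{ab}{IJ}.
\]
Substituting $(I,J)=(1,\pm1)$ gives $\rho=1\mp ab$, the relation for the Landis-Platonova form, and substituting $(a,b)=(1,\pm1)$ gives $\rho=1\mp\frac1{IJ}$, the relation for the Ovsienko-Tabachnikov form. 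The main thing demanding care is the bookkeeping --- keeping exactly enough of the $4$-jet ($q$ only to the order showing $q=O(2)$, and $f_{xxx}$ only to first order) and fixing once and for all the cross-ratio convention and the left/right labelling so that everything is consistent with Property \ref{only-local-transitions}; the $x\leftrightarrow y$ symmetry of $(\mathcal H)$ makes those choices harmless.
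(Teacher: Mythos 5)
Your proof is correct, and it reaches the result by a more self-contained route than the paper. The paper treats this theorem as a one-line corollary of its general Monge-form formula (Theorem~\ref{formula-invariant}): taking the asymptotic lines as axes, the prenormal form has $f_{11}=1$, $f_{21}=f_{12}=0$, $f_{31}=a$, $f_{13}=b$, $f_{40}=I$, $f_{04}=J$, and the formula immediately gives $\rho=1-ab/IJ$. That general formula is itself proved via a Lemma computing $L_{F_r}$ and $L_{F_\ell}$, where the flecnodal curve is described through the contact geometry of $J^1(\R,\R)$: the IDE surface $\A=\{a^f=0\}$ and its apparent contour $\{a^f_x+pa^f_y=0\}$ under the Legendre dual projection. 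You instead compute everything directly on the prenormal form: you obtain the flecnodal condition as $D^3_v f=0$ along the asymptotic field $v=(1,q)$ by an elementary contact-order argument plus the implicit function theorem ($q=O(2)$ since the cubic terms are absent), which is in fact the same equation as the paper's, because $a^f_x+pa^f_y=f_{30}+3f_{21}p+3f_{12}p^2+f_{03}p^3=D^3_{(1,p)}f$; and your tangent lines $Ix+ay=0$, $bx+Jy=0$ agree with the Lemma's after specialization. Your cross-ratio computation (slopes, with $L_\ell$ at infinity) uses the same standard convention as the paper's (intersection with $y=1$, with $L_r$ at infinity — a M\"obius reparametrization of the same pencil), so the values coincide, and your $x\leftrightarrow y$ symmetry argument correctly disposes of the left/right labelling, mirroring the paper's Property~1. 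What the paper's route buys is the general formula \eqref{formula-invariant1}, valid for any Monge form with the asymptotic lines as axes (and used elsewhere, e.g.\ for the index); what your route buys is independence from the cited $J^1$-machinery and a shorter, special-case verification. The only caveats are cosmetic: the cross-ratio normalisation is a convention (consistent with the paper's) rather than something ``forced'' by Property~\ref{only-local-transitions}, and your tangent-line computation tacitly uses genericity ($(I,a)\neq(0,0)$, $(J,b)\neq(0,0)$), which you do flag.
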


\begin{proof}
It is a direct corollary of Theorem\,\ref{formula-invariant} below. 
\end{proof}

\begin{remark*}
  In order that the normal form (L-P) represents only generic hyperbonodes,
  the restrictions $ab\neq 0$ and $ab\neq \pm 1$ have to be imposed. 
  Similarly, the conditions $IJ\neq 0$ and $IJ\neq \pm 1$ have to be imposed to (O-T). 
\end{remark*}

\noindent
\textbf{\small Formulas}. In order to get formulas for the cr-invariant, we shall identify the affine
chart of the projective space,
$\{[x : y : z : 1]\}\subset\RP^3$ with
$\R^3$, and present the germs of surfaces in $\R^3$ at the origin in Monge form \,$z=f(x,y)$\, with
\,$f(0,0)=0$\, and \,$df(0,0)=0$.
\medskip

\noindent
\textbf{\small Notation}. We shall express the partial derivatives of $f$ with numerical subscripts\,:
\[f_{ij}(x,y):=(\D^{i+j}f/\D x^i\D y^j)(x,y) \quad \text{and} \quad f_{ij}:=f_{ij}(0,0)\,.\]

\begin{theorem}[Proved in Section\,\ref{sect-proof-formula-invariant}]\label{formula-invariant}
  Let $h$ be a hyperbonode of a generic smooth surface. Taking the asymptotic lines as 
  coordinate axes, the cr-invariant of $h$ is given by
\begin{equation}\label{formula-invariant1}
  \rho(h)=1-\frac{(3f_{21}^2-2f_{11}f_{31})(3f_{12}^2-2f_{11}f_{13})}{4f_{11}^2f_{40}f_{04}}\,.
\end{equation}
\end{theorem}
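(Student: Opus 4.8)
The plan is to write down explicitly, in the coordinates of the statement, the two asymptotic lines and the two tangent lines to the branches of the flecnodal curve at $h$, and then to compute their cross-ratio.

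\textbf{Normalisation.} Since the coordinate axes are asymptotic at $h=(0,0)$, the asymptotic directions of $S$ are the null directions of the Hessian form $f_{20}(x,y)\,dx\,dx+2f_{11}(x,y)\,dx\,dy+f_{02}(x,y)\,dy\,dy$ (the positive scalar relating it to the second fundamental form is irrelevant), so $f_{20}=f_{02}=0$ and $f_{11}\neq 0$ (otherwise $h$ would be planar, not hyperbolic). A tangent line through a point $(x_0,y_0)$ in an asymptotic direction $v=(v_1,v_2)$ has contact of order $\ge 4$ with $S$ precisely when the cubic form $f_{30}(x_0,y_0)v_1^3+3f_{21}(x_0,y_0)v_1^2v_2+3f_{12}(x_0,y_0)v_1v_2^2+f_{03}(x_0,y_0)v_2^3$ vanishes on $v$ (this is read off directly from the Taylor expansion of the height function along the line, no metric involved); at the origin it gives $f_{30}=0$ for the $x$-axis and $f_{03}=0$ for the $y$-axis. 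As $h$ lies on both flecnodal branches, $f_{30}=f_{03}=0$.

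\textbf{The flecnodal branches and their tangents.} Near $h$ the asymptotic field splits into a branch $v=(1,\e(x,y))$ and a branch $v=(\delta(x,y),1)$, with $\e,\delta$ the small roots of the Hessian form; from $f_{20}(x,y)+2f_{11}(x,y)\e+f_{02}(x,y)\e^{2}=0$ one gets $\e(x,y)=-\tfrac{f_{21}}{2f_{11}}y+O_2$ and, symmetrically, $\delta(x,y)=-\tfrac{f_{12}}{2f_{11}}x+O_2$, where $O_2$ denotes terms of order $\ge 2$ in $(x,y)$. Substituting $v=(1,\e(x,y))$ into the $\ge 4$-point contact condition gives the implicit equation $\Phi_{1}(x,y)=0$ of the flecnodal branch $F_{1}$ carrying the $x$-asymptotic direction; its linear part comes only from $f_{30}(x,y)=f_{40}x+f_{31}y+O_2$ and from $3f_{21}(x,y)\,\e(x,y)=-\tfrac{3f_{21}^{2}}{2f_{11}}y+O_2$ (the remaining summands being $O_2$), whence the tangent line of $F_{1}$ at $h$ is $f_{40}x-\tfrac{3f_{21}^{2}-2f_{11}f_{31}}{2f_{11}}y=0$, i.e.\ the line with direction $(3f_{21}^{2}-2f_{11}f_{31},\ 2f_{11}f_{40})$. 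By the $x\leftrightarrow y$ symmetry of the configuration, $F_{2}$ has tangent direction $(2f_{11}f_{04},\ 3f_{12}^{2}-2f_{11}f_{13})$.

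\textbf{The cross-ratio.} Two of the four lines appearing in $\rho(h)=(L_{F_\ell},L_r,L_{F_r},L_\ell)$ are the coordinate axes, so the cross-ratio reduces to one on the projective line through the points $0$, $\infty$ and the slopes $\tfrac{2f_{11}f_{40}}{3f_{21}^{2}-2f_{11}f_{31}}$ and $\tfrac{3f_{12}^{2}-2f_{11}f_{13}}{2f_{11}f_{04}}$, and it collapses to
\[\rho(h)=1-\frac{(3f_{21}^{2}-2f_{11}f_{31})(3f_{12}^{2}-2f_{11}f_{13})}{4f_{11}^{2}f_{40}f_{04}}.\]
Being symmetric under $x\leftrightarrow y$, this expression is unchanged if the roles of $L_\ell$ and $L_r$ are interchanged, consistently with the orientation-independence of the cr-invariant, and one checks that its negativity corresponds to the separation property stated above. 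The only genuine work is the middle step: keeping careful track of exactly which first- and second-order terms of $\Phi_1$ (and $\Phi_2$) survive after linearisation, and invoking the genericity of $S$ to guarantee that $F_1,F_2$ are smooth at $h$ with distinct tangents and that $f_{11}f_{40}f_{04}(3f_{21}^{2}-2f_{11}f_{31})(3f_{12}^{2}-2f_{11}f_{13})\neq 0$.
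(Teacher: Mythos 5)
Your argument is correct, and its skeleton is the same as the paper's: first find the tangent lines to the two flecnodal branches at $h$ in the coordinates where the asymptotic lines are the axes, then compute the cross-ratio of the four concurrent lines using that two of them are the axes. Your tangents agree exactly with the Lemma of Section~\ref{sect-proof-formula-invariant}, namely $2f_{11}f_{40}\,x-(3f_{21}^2-2f_{11}f_{31})\,y=0$ and $2f_{11}f_{04}\,y-(3f_{12}^2-2f_{11}f_{13})\,x=0$, and your final step is the paper's computation in a different parametrisation of the pencil (slopes instead of intersections with $y=1$; both give $1-m_{F_\ell}/m_{F_r}$, i.e.\ \eqref{formula-invariant1}). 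Where you genuinely differ is the proof of the Lemma: the paper lifts the asymptotic IDE to $\A=\{a^f=0\}\subset J^1(\R,\R)$, describes the projected flecnodal curve as the image of $\A\cap\{a^f_x+p\,a^f_y=0\}$, and intersects tangent planes; you stay in the $(x,y)$-plane, solve the asymptotic field as $\e(x,y)=-\tfrac{f_{21}}{2f_{11}}y+O_2$, impose vanishing of the cubic directional derivative ($\ge 4$-point contact), and linearise. Your route is more elementary and self-contained, avoiding the contact-geometric characterisation imported from \cite{Uribetesis, Uribeevolution}, at the cost of justifying that characterisation of flecnodal points by hand (which you do, and the tangent-plane identification at $h$ makes the projection harmless); the paper's route buys a framework reused for the left branch and the elliptic case. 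The only presentational gap is that you never fix the cross-ratio convention before asserting the ``collapse''; stating that with slopes $(m_{F_\ell},0,m_{F_r},\infty)$ the cross-ratio equals $1-m_{F_\ell}/m_{F_r}$ would make that step immediately checkable.
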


Thus, the Monge form for the above special hyperbonodes ($\rho=0$ and $\rho=\infty$) satisfies
the following relations (taking the asymptotic lines as coordinates axes)\,:

\begin{corollary}
  At a hyperbonode the left and right branches of the flecnodal curve are tangent 
  (i.e. there is a double hyperbonode) if and only if
  \[4f_{11}^2f_{40}f_{04}-(3f_{21}^2-2f_{11}f_{31})(3f_{12}^2-2f_{11}f_{13})=0\,.\] 
\end{corollary}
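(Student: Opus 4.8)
The plan is to read the corollary directly off Theorem~\ref{formula-invariant} and part~(a) of Property~\ref{only-local-transitions}. By definition a \emph{double hyperbonode} is a hyperbonode $h$ at which the two branches $F_\ell$ and $F_r$ of the flecnodal curve are tangent, and Property~\ref{only-local-transitions}(a) records that this happens precisely when the cr-invariant satisfies $\rho(h)=0$. So the only task is to rewrite the equation $\rho(h)=0$, using the formula~\eqref{formula-invariant1} in the coordinates where the asymptotic lines at $h$ are taken as the axes, as a polynomial condition on the partial derivatives of the Monge function $f$.

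Concretely, I would substitute $\rho(h)=0$ into \eqref{formula-invariant1}, getting
\[
\frac{(3f_{21}^2-2f_{11}f_{31})(3f_{12}^2-2f_{11}f_{13})}{4f_{11}^2f_{40}f_{04}}=1,
\]
and then clear the denominator to obtain exactly $4f_{11}^2f_{40}f_{04}-(3f_{21}^2-2f_{11}f_{31})(3f_{12}^2-2f_{11}f_{13})=0$. Conversely the same identity forces the quotient to equal $1$, hence $\rho(h)=0$, hence a double hyperbonode. So, provided the denominator $4f_{11}^2f_{40}f_{04}$ does not vanish, the asserted equivalence is immediate and the rest is trivial algebra.

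The one point that is not completely automatic — and hence the main (if modest) obstacle — is to verify $f_{11}^2f_{40}f_{04}\neq 0$ at a double hyperbonode, so that clearing the denominator is a reversible operation rather than something that could be contaminated by zeros of the denominator. Here $f_{11}\neq 0$ because $h$ is a hyperbolic point and the axes are its asymptotic directions, so the quadratic part of $f$ at the origin is the nondegenerate form $f_{11}xy$. And $f_{40}\neq 0$, $f_{04}\neq 0$: the vanishing of either would mean that the corresponding asymptotic line is tangent to a flecnodal branch through $h$ (a biflecnode overlapping $h$), i.e. $h$ would be a flec-hyperbonode, and by Property~\ref{only-local-transitions}(b) that forces $\rho(h)=\infty$, which is incompatible with $\rho(h)=0$. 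In other words, a (generic) double hyperbonode is never simultaneously a flec-hyperbonode, so the denominator is nonzero on both sides of the equivalence and the corollary follows.
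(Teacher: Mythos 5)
Your proposal is correct and follows essentially the same route as the paper: the paper's proof is exactly the substitution of $\rho(h)=0$ into formula \eqref{formula-invariant1} and clearing the denominator. Your extra check that $f_{11}f_{40}f_{04}\neq 0$ (so the clearing is reversible, via Property~\ref{only-local-transitions}) is a sensible refinement the paper leaves implicit, but it does not change the argument.
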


\begin{proof}
One obtains this equality from formula \eqref{formula-invariant1} by putting $\rho(h)=0$. 
\end{proof}

\begin{corollary}
  At a hyperbonode $h$ an asymptotic tangent line has at least $5$-point contact with the surface
  (i.e. a biflecnode overlaps with $h$) if and only if $f_{40}f_{04}=0$. 
\end{corollary}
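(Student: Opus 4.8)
The plan is to deduce the statement from formula \eqref{formula-invariant1} of Theorem \ref{formula-invariant} together with Property \ref{only-local-transitions}, in the same spirit as the preceding corollary was obtained by setting $\rho(h)=0$. The relevant value here is $\rho(h)=\infty$. First I would record the equivalence ``a biflecnode overlaps with $h$'' $\iff$ $\rho(h)=\infty$: by Property \ref{only-local-transitions}b the latter holds precisely when $L_\ell$ is tangent to $F_\ell$ or $L_r$ is tangent to $F_r$, and, as recalled in the Introduction, an asymptotic line is tangent to its corresponding branch of the flecnodal curve exactly when that line has at least $5$-point contact with $S$, i.e. when a biflecnode overlaps with $h$.

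Next I would apply \eqref{formula-invariant1} with the asymptotic lines of $h$ taken as the coordinate axes. The constant $1$ and the numerator $(3f_{21}^2-2f_{11}f_{31})(3f_{12}^2-2f_{11}f_{13})$ are finite, so $\rho(h)=\infty$ forces the denominator $4f_{11}^2f_{40}f_{04}$ to vanish, and conversely. To replace $4f_{11}^2f_{40}f_{04}=0$ by $f_{40}f_{04}=0$ I need $f_{11}\neq0$: with the asymptotic directions along the axes the second fundamental form of $S$ at $h$ has the $x$- and $y$-axes as null directions, hence $f_{20}=f_{02}=0$, and hyperbolicity of $h$ reads $f_{20}f_{02}-f_{11}^2=-f_{11}^2<0$, so indeed $f_{11}\neq0$. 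This yields: a biflecnode overlaps with $h$ iff $f_{40}f_{04}=0$. As a self-contained cross-check I would also note the direct argument: in these coordinates $h$ lies on both flecnodal branches, so both axes are over-osculating, forcing $f_{20}=f_{30}=0$ and $f_{02}=f_{03}=0$; then $f(x,0)=\tfrac1{24}f_{40}x^4+O(x^5)$ and $f(0,y)=\tfrac1{24}f_{04}y^4+O(y^5)$, so $L_r$ (resp. $L_\ell$) has at least $5$-point contact with $S$ iff $f_{40}=0$ (resp. $f_{04}=0$).

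The only genuine obstacle is a bookkeeping one inside the formula-based route: one must be sure that when the denominator of \eqref{formula-invariant1} vanishes we are really in the case $\rho=\infty$ rather than an indeterminate $0/0$ --- equivalently, that the numerator factors $3f_{21}^2-2f_{11}f_{31}$ and $3f_{12}^2-2f_{11}f_{13}$ do not vanish simultaneously with $f_{40}$ and $f_{04}$. For a generic hyperbonode the two flecnodal branches are smooth curves through $h$ and those numerator factors govern their tangent directions relative to the asymptotic lines, so the genericity hypothesis already excludes this; and in any case the direct contact-order argument of the previous paragraph sidesteps the point entirely, so that is the version I would ultimately present.
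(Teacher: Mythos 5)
Your proposal is correct and follows essentially the paper's own route: the paper's proof likewise combines the observation that, with $f_{20}=f_{30}=0$ and $f_{02}=f_{03}=0$ at the hyperbonode, the axes have exactly $4$-point contact precisely when $f_{40}\neq 0$ and $f_{04}\neq 0$, with formula \eqref{formula-invariant1} evaluated at $\rho(h)=\infty$. Your extra care about $f_{11}\neq 0$ and about excluding a $0/0$ degeneration only makes explicit what the paper leaves implicit.
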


\begin{proof}
  At a hyperbonode we have $f_{30}=f_{03}=0$.
  The surface has exactly $4$-point contact with the $x$ and $y$ axes if and only if
  $f_{40}\neq 0$ and $f_{04}\neq 0$. The statement follows from formula \eqref{formula-invariant1} assuming that
  $\rho(h)=\infty$. 
\end{proof}

\noindent
\textbf{Theorem \ref{formula-invariant}'.}
  {\itshape If at a hyperbonode $h$ we take the diagonals $\,y=\pm x\,$ as asymptotic lines and we assume the cubic 
  terms of $f$ are missing for the chosen affine coordinate system, then}
\begin{equation}\label{formula-invariant1'}
\rho(h)=\frac{4[(f_{40}+3f_{22})(f_{04}+3f_{22})-
(f_{31}+3f_{13})(f_{13}+3f_{31})]}{(f_{40}+6f_{22}+f_{04})^2-16(f_{31}+f_{13})^2}\,.
\end{equation}  

\section{The index of a hyperbonode}

At any point of the left flecnodal curve, which is not a left biflecnode, the left asymptotic line has
exactly $4$-point contact with the surface. Thus at all such points the left asymptotic line locally
lies in one of the two sides of the surface - the same holds for the right flecnodal curve.
Then in a tubular neighbourhood of the flecnodal curve, the surface is locally cooriented by the corresponding
asymptotic line - this local coorientation changes only at the biflecnodes.

At a hyperbonode both asymptotic lines may locally lie in the same side of the surface, but also may
locally lie in opposite sides.  
\medskip

\noindent 
\textbf{\small Parity}. We say that a hyperbonode has \textit{parity} $\s(h)=+1$ if both asymptotic lines
locally lie on the same side of the surface and $\s(h)=-1$ 
otherwise.\footnote{The    
choice of a local coorientation of the surface $S$ provides a \textit{signature} to each hyperbonode 
(\cite{Ovsienko-Tabachnikov}): it is a pair of signs, $s=(s_r,s_\ell)$, which are $+$ (or $-$) if that 
local coorientation coincides (or not) with the coorientations given by the left and right asymptotic 
lines. Of course, $s$ depends on the chosen coorientation of $S$. 
The points of signature $s=(+,+)$ or $(-,-)$ have parity $\s=+1$, and those of signature 
$(+,-)$ or $(-,+)$ have parity $\s=-1$.}

\begin{remark*} 
The cr-invariant $\rho$ and the parity $\s$ are independent of the orientation of $\RP^3$.  
\end{remark*}

\noindent
\textbf{\small Index}.
  A generic hyperbonode is said to be \textit{positive} or of \textit{index} $+1$ if \,$\s(h)\rho(h)>0$, 
  and \textit{negative} or of  \textit{index} $-1$ if $\s(h)\rho(h)<0$, that is, 
  \[\ind(h)=\s(h)\,\sign(\rho(h))\,.\]

\begin{proposition}
In generic $1$-parameter families of smooth surfaces, the two hyperbonodes that collapse during a
creation/annihilation transition have opposite indices.  
\end{proposition}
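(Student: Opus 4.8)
The plan is to show that the two hyperbonodes $h_1=h_1(t)$ and $h_2=h_2(t)$ created (or annihilated) at the transition moment $t=0$ satisfy $\sign(\rho(h_1))=-\sign(\rho(h_2))$ and $\s(h_1)=\s(h_2)$; since $\ind(h_i)=\s(h_i)\sign(\rho(h_i))$, these two facts immediately give $\ind(h_1)=-\ind(h_2)$. To set up the local model, recall that, by Property \ref{only-local-transitions}, a creation/annihilation of a pair of hyperbonodes inside the hyperbolic domain occurs through a \emph{double hyperbonode}, i.e.\ at $\rho=0$: at $t=0$ the branches $F_\ell$ and $F_r$ are tangent at a point $p_0$ while each stays transverse to both asymptotic lines, so no biflecnode is involved near $p_0$. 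Choosing coordinates $(u,v)$ on the surface near $p_0$ (origin at $p_0$), write $F_\ell=\{v=\f_\ell(u,t)\}$ and $F_r=\{v=\f_r(u,t)\}$; the transition being generic, the difference $g=\f_r-\f_\ell$ has a Morse tangency at $(u,t)=(0,0)$: $g=\D_u g=0$ and $\D_u^2 g\neq 0$, $\D_t g\neq 0$ there. Hence for $t$ on the appropriate side the two hyperbonodes sit at parameters $u=u_+(t)$ and $u=u_-(t)$ of opposite signs, both tending to $0$, each lying on one arc of $F_\ell$ and one arc of $F_r$.

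For the sign of $\rho$, note first that the signed angle between the tangent lines $L_{F_\ell}$ and $L_{F_r}$ at a point $u$ of this common arc equals $\D_u g(u,t)$ to first order; since $g(u_+(t),t)=g(u_-(t),t)=0$ while $u\mapsto\D_u g(u,t)$ is monotone near $u=0$ (because $\D_u^2 g(0,0)\neq0$), Rolle's theorem forces $\D_u g$, hence that angle, to have opposite signs at $u_+(t)$ and $u_-(t)$. On the other hand, viewing the cross-ratio $\rho=(L_{F_\ell},L_r,L_{F_r},L_\ell)$ as a function of the position of the line $L_{F_r}$ in the pencil of lines through the hyperbonode, it vanishes exactly when $L_{F_r}=L_{F_\ell}$ and --- because at a generic hyperbonode the lines $L_{F_\ell}$, $L_r$, $L_\ell$ are pairwise distinct --- has nonzero derivative there. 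As $t\to 0$ the three lines $L_{F_\ell}$, $L_r$, $L_\ell$ at both $h_+(t)$ and $h_-(t)$ converge to three fixed distinct lines through $p_0$, so to leading order $\rho(h_\pm)$ equals one and the same nonzero constant times the signed angle between $L_{F_\ell}$ and $L_{F_r}$ at $h_\pm$. Since those angles have opposite signs, $\sign(\rho(h_1))=-\sign(\rho(h_2))$; this is Property \ref{sign-distinguishes} made explicit.

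For the parities, recall that away from a biflecnode the left asymptotic line has exactly $4$-point contact with $S$ and so lies locally on a definite side of $S$; this selects a coorientation of $S$ along $F_\ell$ that is locally constant, changing only at left biflecnodes, and similarly along $F_r$. The hyperbonodes $h_1$ and $h_2$ are joined by an arc of $F_\ell$ carrying no biflecnode, so the $L_\ell$-coorientation of $S$ is the same at $h_1$ and at $h_2$ (the normal bundle of $S$ being trivial over that arc); likewise the $L_r$-coorientation agrees at $h_1$ and $h_2$ along $F_r$. Since $\s(h_i)=+1$ precisely when these two coorientations coincide at $h_i$, we get $\s(h_1)=\s(h_2)$, whence $\ind(h_1)=\s(h_1)\sign(\rho(h_1))=\s(h_2)\bigl(-\sign(\rho(h_2))\bigr)=-\ind(h_2)$.

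The delicate point is the last paragraph: one must check that the same two points $h_1$, $h_2$ are joined both by a biflecnode-free arc of $F_\ell$ and by a biflecnode-free arc of $F_r$ --- this is where genericity of the family and the fact that the transition is at $\rho=0$ (not $\rho=\infty$) are used to exclude nearby biflecnodes --- and that the coorientation of $S$ is itself unambiguous on the small neighbourhood of $p_0$ where everything happens, so that comparing the $L_\ell$- and $L_r$-coorientations at $h_1$ with those at $h_2$ is meaningful.
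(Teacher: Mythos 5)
Your proof is correct and follows essentially the same route as the paper: the cr-invariant has opposite signs at the two colliding hyperbonodes (Property~\ref{sign-distinguishes}), while their parities agree because no biflecnode lies on the short arcs of $F_\ell$ and $F_r$ joining them, so the indices $\ind=\s\,\sign(\rho)$ are opposite. The only difference is that you spell out in detail what the paper leaves implicit --- the Morse-tangency/Rolle argument behind Property~\ref{sign-distinguishes} and the coorientation bookkeeping along the biflecnode-free arcs --- which is a useful elaboration but not a different approach.
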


\begin{proof}
  The cr-invariant has distinct sign at these hyperbonodes (Property\,\ref{sign-distinguishes}), and
  either both hyperbonodes are odd, $\s(h)=-1$, or both are even, $\s(h)=+1$, because there is no biflecnode
  between them. Then they have opposite indices. 
\end{proof}

\begin{proposition}
  When a hyperbonode undergoes a ``flec-hyperbonode'' transition
  its index does not change {\rm (Fig.\,\ref{flec-hyperbonode})}. 
\end{proposition}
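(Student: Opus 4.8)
The plan is to show that, across a flec-hyperbonode transition, both factors in the index $\ind(h)=\s(h)\,\sign(\rho(h))$ reverse sign, so that their product --- the index --- is unchanged. Consider a generic $1$-parameter family $S_t$ in which a hyperbonode $h(t)$ undergoes such a transition at $t=t_0$. By Property~\ref{only-local-transitions}(b) this is precisely the passage of $\rho(h(t))$ through $\infty$, and by the corollary to Theorem~\ref{formula-invariant} characterising $\rho=\infty$ it means that, after choosing the asymptotic lines of $h(t)$ as coordinate axes, one of the coefficients $f_{40}$, $f_{04}$ vanishes at $t_0$; say $f_{40}(t_0)=0$, the case $f_{04}(t_0)=0$ being symmetric. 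For a generic family $f_{40}(t)$ has a simple zero at $t_0$, while $f_{04}(t)$ and all the other quantities occurring in formula~\eqref{formula-invariant1} stay bounded away from $0$ and $\infty$ near $t_0$ (the degenerate value $\rho=1$, where the numerator of \eqref{formula-invariant1} would vanish, being excluded by genericity).

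First I would read off from \eqref{formula-invariant1} that $\sign(\rho(h))$ flips: as $f_{40}(t)$ passes through $0$ the second term of \eqref{formula-invariant1} runs off to $\pm\infty$ and comes back with the opposite sign, hence so does $\rho(h(t))$. Then I would show that $\s(h)$ flips as well. At a hyperbonode one has $f_{20}=f_{02}=0$ (asymptotic axes) and $f_{30}=f_{03}=0$, so along the $x$-axis the surface is $z=f(x,0)=\tfrac{1}{4!}f_{40}x^{4}+O(x^{5})$; thus, near the origin, the $x$-axis asymptotic line lies on the side of $S$ determined by $\sign(f_{40})$, and likewise the $y$-axis asymptotic line lies on the side determined by $\sign(f_{04})$. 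Hence $\s(h)=\sign(f_{40}f_{04})$, and since $f_{40}$ reverses sign at $t_0$ while $f_{04}$ does not, $\s(h)$ reverses sign too. This is the analytic form of the observation, made just before the definition of parity, that the local coorientation of $S$ by an asymptotic line changes exactly at a biflecnode. Combining the two sign reversals, $\ind(h)=\s(h)\,\sign(\rho(h))$ takes the same value on both sides of $t_0$, i.e. it is constant through the transition; the symmetric case $f_{04}(t_0)=0$ is identical.

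The delicate point --- and the real content of the argument --- is precisely that these two sign reversals occur in lockstep. One has to verify that at a generic flec-hyperbonode transition exactly one of $f_{40},f_{04}$ vanishes (so that exactly one of the two asymptotic-line coorientations flips and the other is stable), that this zero is simple, and that the parity is genuinely governed by $\sign(f_{40}f_{04})$, i.e. that the $4$-point contact of each asymptotic line with $S_t$ remains non-degenerate for $t\neq t_0$ close to $t_0$. All of these follow from genericity of the family together with the Monge-form computation underlying Theorem~\ref{formula-invariant}, but it is exactly their conjunction that produces the cancellation of the two sign changes, and hence the invariance of the index.
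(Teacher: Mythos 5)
Your proposal is correct and follows essentially the same route as the paper: the paper's proof is precisely the observation that at a flec-hyperbonode transition both the parity $\s(h)$ and the cr-invariant $\rho(h)$ change sign, so their product $\ind(h)$ is unchanged. You simply verify this one-line claim in coordinates, using formula \eqref{formula-invariant1} and the identity $\s(h)=\sign(f_{40}f_{04})$ (which the paper itself invokes in the proof of Theorem~\ref{formula-index}), which is a legitimate and slightly more detailed version of the same argument.
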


\begin{proof}
  At a ``flec-hyperbonode'' transition both the parity and the cr-invariant of the hyperbonode
  change sign. Thus the index remains constant. 
\end{proof}

A corollary of these propositions is the following theorem\,: 

\begin{theorem}\label{index-invariance}
The sum of the indices of the hyperbonodes inside a connected component $H$ of the hyperbolic
domain remains constant in a continuous $1$-parameter family of surfaces, provided that the topological
type of $H$ does not change.
\end{theorem}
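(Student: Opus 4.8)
The plan is to track how the configuration of hyperbonodes inside $H$ can change along the $1$-parameter family, and show that each admissible local transition preserves the total index. Since the family is continuous and generic, hyperbonodes move continuously and the only events that alter their number or nature are the local transitions already classified: creation/annihilation pairs (Property \ref{sign-distinguishes}, $\rho=0$ type) and flec-hyperbonode transitions ($\rho=\infty$ type), together with the possibility that a hyperbonode crosses the boundary $\partial H$ of the component. The two Propositions immediately preceding the theorem dispose of the first two cases: in a creation/annihilation transition the two hyperbonodes born (or dying) have opposite indices by the first Proposition, so the sum over $H$ is unchanged; in a flec-hyperbonode transition the index of the single hyperbonode involved is unchanged by the second Proposition. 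Hence the sum $\sum_{h\in H}\ind(h)$ is locally constant across all interior transitions.

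The remaining case to rule out is a hyperbonode reaching the boundary of $H$, i.e. the parabolic curve (or whatever bounds the hyperbolic component). Here I would invoke the hypothesis that \emph{the topological type of $H$ does not change}: if no hyperbonode crosses $\partial H$, this case does not occur, and if the topological type of $H$ is preserved then in particular $\partial H$ does not pass through a hyperbonode at any parameter value (such a collision would force a metamorphosis of the parabolic curve, changing the topology of $H$ — recall the metamorphosis picture in Fig.\,\ref{metamorphosis}, where ellipnodes and hyperbonodes sit exactly at the Morse transitions of the parabolic curve). So under the stated hypothesis the set of hyperbonodes of $H$ evolves by interior transitions only.

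Putting these pieces together: partition the parameter interval into finitely many subintervals on whose interiors the family is ``generic'' in the sense that only the classified codimension-one transitions occur, at isolated parameter values, and between them the hyperbonodes of $H$ depend continuously on the parameter with locally constant index. On each such subinterval $\sum_{h\in H}\ind(h)$ is manifestly constant; at each transition value it is preserved by the appropriate Proposition (creation/annihilation: two opposite indices added or removed; flec-hyperbonode: one index unchanged); and no boundary crossing occurs because the topological type of $H$ is fixed. Therefore the sum is constant over the whole interval.

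The main obstacle is making the boundary argument rigorous: one must be sure the list of local transitions in Property \ref{only-local-transitions} is genuinely exhaustive for generic $1$-parameter families, and that ``topological type of $H$ unchanged'' really forbids a hyperbonode–parabolic-curve collision (as opposed to merely forbidding Morse surgeries of $H$ that happen away from hyperbonodes). One also needs genericity of the family to guarantee that no two transitions coincide and that hyperbonodes stay isolated; this is the standard transversality input, and I would simply cite it (cf. \cite{Uribeevolution}) rather than reprove it, since the bulk of the theorem is the bookkeeping already handled by the two Propositions.
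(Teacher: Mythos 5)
Your proof is correct and follows essentially the same route as the paper: the paper's proof is precisely the observation that Property~\ref{only-local-transitions} limits the interior transitions to creation/annihilation and flec-hyperbonode events, which the two preceding Propositions show preserve the index sum. Your additional discussion of a hyperbonode reaching $\partial H$ is the part the paper leaves implicit in the hypothesis that the topological type of $H$ is unchanged (via the fact, recalled from \cite{Uribeevolution}, that such collisions occur exactly at Morse metamorphoses of the parabolic curve), so it is a reasonable elaboration rather than a different argument.
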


\begin{proof}
The only local transitions that involve hyperbonodes, occurring in the interior of the hyperbolic domain, are
the  creation/annihilation and the flec-hyperbonode transitions (Property \ref{only-local-transitions}).
\end {proof}

A consequence of the theorem of \cite{Uribeinvariant} (p.\,744) quoted in the introduction is the

\begin{corollary*}
  In a hyperbolic disc $D$ bounded by a closed parabolic curve, the sum of the indices of the
  hyperbonodes inside $D$ equals $1$. 
\end{corollary*}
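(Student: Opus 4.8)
The plan is to combine the index-invariance theorem with the known parity count from \cite{Uribeinvariant}. The cited result states that a hyperbolic disc $D$ bounded by a Jordan parabolic curve contains an odd number of hyperbonodes, and, more precisely (this is the content of p.\,744 of \cite{Uribeinvariant}), that the number of hyperbonodes of parity $\s=-1$ and the number with $\s=+1$ differ in a controlled way — the algebraic count of parities equals $1$. So my first step is to recall exactly what invariant is proved there: the sum $\sum_{h\in D}\s(h)$ is a deformation invariant equal to $1$ for a disc bounded by a closed parabolic curve.

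Next I would reduce the index sum to the parity sum by a continuity/normalisation argument. By Theorem\,\ref{index-invariance}, $\sum_{h\in D}\ind(h)=\sum_{h\in D}\s(h)\,\sign(\rho(h))$ is constant in any family preserving the topological type of $D$ (here a disc, whose boundary stays a single parabolic circle). Hence it suffices to evaluate it on one convenient representative surface. On a suitably chosen model — e.g.\ a small perturbation of a standard hyperbolic patch with a single closed parabolic curve and exactly one hyperbonode inside — one checks directly that this unique hyperbonode has $\sign(\rho(h))=+1$, so that $\ind(h)=\s(h)=1$, matching the parity count. Alternatively, and more robustly, I would argue that in \emph{any} family joining two such discs the quantity $\sum\s(h)\,\sign(\rho(h))$ cannot jump: at a creation/annihilation transition the two colliding hyperbonodes have equal parity and opposite $\sign(\rho)$, so they contribute $0$ to both $\sum\s$ and $\sum\ind$ simultaneously; at a flec-hyperbonode transition both $\s$ and $\sign(\rho)$ flip, so each of $\sum\s$ and $\sum\ind$ is unchanged. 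Therefore $\sum_D\ind(h)$ and $\sum_D\s(h)$ obey the same transition rules, and since they agree on one model they agree everywhere; by \cite{Uribeinvariant} the common value is $1$.

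The main obstacle is the bookkeeping at the transitions, in particular making sure the parity sum $\sum\s$ is genuinely invariant under \emph{both} transition types with the \emph{same} sign conventions used for the index. For the creation/annihilation transition this is exactly the argument in the proof of the first Proposition above (no biflecnode lies between the two colliding hyperbonodes, so they share a parity), and for the flec-hyperbonode transition it is the observation in the second Proposition that the parity flips together with $\sign(\rho)$. Once those two facts are in hand, the identity $\ind(h)=\s(h)\sign(\rho(h))$ shows the index sum and the parity sum transform identically, and the corollary is immediate from the invariant of \cite{Uribeinvariant}. I would therefore present the proof as: cite the parity invariant from \cite{Uribeinvariant}, invoke Theorem\,\ref{index-invariance} to know the index sum is well defined, and then match the two invariants either by a direct model computation or by the parallel transition analysis just sketched. \fin
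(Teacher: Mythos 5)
Your argument has a genuine gap, and it sits exactly at the step you flag as ``bookkeeping''. You claim that at a creation/annihilation transition the two colliding hyperbonodes ``contribute $0$ to both $\sum\s$ and $\sum\ind$ simultaneously''. But, as you yourself note (and as the paper's first Proposition states), the two colliding hyperbonodes have \emph{equal} parity and opposite $\sign(\rho)$; so while they do contribute $0$ to $\sum\ind(h)$, they contribute $\pm 2$ to $\sum\s(h)$. Hence the parity sum is \emph{not} invariant under creation/annihilation transitions, the claim that $\sum\ind$ and $\sum\s$ ``obey the same transition rules'' is false, and the identification $\sum_D\ind(h)=\sum_D\s(h)$ collapses. (This is precisely the reason the index is defined as $\s(h)\,\sign(\rho(h))$ rather than as the parity alone.) Compounding this, your starting premise misquotes the cited input: what the paper quotes from \cite{Uribeinvariant} (p.\,744) is that a hyperbolic disc bounded by a Jordan parabolic curve contains an \emph{odd number} of hyperbonodes; it is not a statement that the signed parity count $\sum_D\s(h)$ equals $1$, and no such parity invariant is available to you.

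Your fallback route --- evaluate $\sum_D\ind(h)$ on one model disc with a single hyperbonode of index $+1$ and transport the value by Theorem~\ref{index-invariance} --- is not wrong in spirit, but it is incomplete as stated: Theorem~\ref{index-invariance} only gives constancy \emph{along a given family} preserving the topological type of the component, so you would additionally need to show that an arbitrary hyperbolic disc bounded by a closed parabolic curve can be joined to your model through such a family (a connectivity statement you neither prove nor cite), and to actually exhibit and verify the model. That connectivity is where the real work would lie. The paper takes a different (and much shorter) route: it deduces the value $1$ directly from the global theorem of \cite{Uribeinvariant} about hyperbonodes in a parabolic-bounded hyperbolic disc, combined with the invariance of the index sum, with no parity-sum intermediary; your proposal as written does not reach the stated value.
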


\begin{theorem}
If the $4$-jet of $h$ is expressed in the ``prenormal form'' $(\mathcal{H})$, then
\[\ind(h)=\sign(IJ-ab)\,.\]
\end{theorem}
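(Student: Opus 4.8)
The plan is to reduce the statement to two ingredients: the relation $\rho(h)=1-\frac{ab}{IJ}$ coming from the cr-invariant formula, and a one-line computation of the parity $\s(h)$ directly from the prenormal form $(\mathcal{H})$. First I would record the relevant derivatives of $f(x,y)=xy+\tfrac{1}{3!}(ax^3y+bxy^3)+\tfrac{1}{4!}(Ix^4+Jy^4)$ at the origin: the quadratic part is $xy$, so $f_{20}=f_{02}=0$ and $f_{11}=1$; moreover $f_{30}=f_{03}=0$, $f_{21}=f_{12}=f_{22}=0$, $f_{31}=a$, $f_{13}=b$, $f_{40}=I$, $f_{04}=J$. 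Since the asymptotic directions at the origin solve $2uv=0$, the asymptotic lines $L_\ell,L_r$ are the coordinate axes, so Theorem \ref{formula-invariant} applies verbatim and gives
\[\rho(h)=1-\frac{(3f_{21}^2-2f_{11}f_{31})(3f_{12}^2-2f_{11}f_{13})}{4f_{11}^2f_{40}f_{04}}=1-\frac{ab}{IJ}=\frac{IJ-ab}{IJ}.\]
Note also that $f_{40}=I$, $f_{04}=J$, so neither axis is a biflecnode precisely when $IJ\neq0$, which holds for a generic hyperbonode; in particular $\rho(h)$ is finite and nonzero.

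Next I would determine on which side of $S$ each asymptotic line lies. Near the origin $S$ splits a neighbourhood into the two regions $\{z>f(x,y)\}$ and $\{z<f(x,y)\}$. Restricting $f$ to the axes gives $f(x,0)=\tfrac{1}{4!}Ix^4$ and $f(0,y)=\tfrac{1}{4!}Jy^4$. Hence, for $x\neq0$ small, the point $(x,0,0)$ of the $x$-axis lies in $\{z<f\}$ iff $I>0$ and in $\{z>f\}$ iff $I<0$; likewise the $y$-axis lies in $\{z<f\}$ iff $J>0$. Therefore $L_\ell$ and $L_r$ lie on the same side of $S$ exactly when $\sign I=\sign J$, i.e. when $IJ>0$, and on opposite sides when $IJ<0$. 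Since $IJ\neq0$ for a generic hyperbonode, this is precisely the statement $\s(h)=\sign(IJ)$.

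Finally I would combine the two ingredients. From the displayed relation, $\sign(\rho(h))=\sign(IJ-ab)\cdot\sign(IJ)$, all three sign factors being $\pm1$ for generic $h$. Using the definition $\ind(h)=\s(h)\,\sign(\rho(h))$ together with $\s(h)=\sign(IJ)$ yields
\[\ind(h)=\sign(IJ)\cdot\sign(IJ-ab)\cdot\sign(IJ)=\sign(IJ-ab),\]
because $\sign(IJ)^2=1$. I do not anticipate a genuine obstacle: the only delicate points are the sign bookkeeping in ``same side $\iff IJ>0$'' and the remark that genericity forces $IJ\neq0$ (equivalently that the normal form $(\mathcal{H})$ represents only generic hyperbonodes when $IJ\neq0$), which is exactly what allows the three sign factors to collapse cleanly.
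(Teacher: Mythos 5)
Your proposal is correct and follows essentially the same route as the paper: the paper derives this theorem as a direct corollary of Theorem \ref{formula-index}, whose proof is exactly your two ingredients — Theorem \ref{formula-invariant} for $\rho$ together with $\s(h)=\sign(f_{40}f_{04})$ ($=\sign(IJ)$ in the prenormal form). You merely specialise and unpack this, including an explicit verification of the parity identity that the paper asserts without detail, and your sign bookkeeping is correct.
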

\begin{proof}
It is a direct corollary of Theorem\,\ref{formula-index} below.
\end{proof}

\begin{theorem}\label{formula-index}
If the asymptotic lines are the coordinate axes at a hyperbonode $h$, then 
  \[\ind(h)=\sign\left(4f_{11}^2f_{40}f_{04}-(3f_{21}^2-2f_{11}f_{31})(3f_{12}^2-2f_{11}f_{13})\right)\,.\]
\end{theorem}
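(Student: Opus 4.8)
The plan is to deduce Theorem \ref{formula-index} directly from the two pieces already available: the definition $\ind(h)=\s(h)\,\sign(\rho(h))$ together with the closed formula \eqref{formula-invariant1} for $\rho(h)$ in Theorem \ref{formula-invariant}. Write $N:=4f_{11}^2f_{40}f_{04}-(3f_{21}^2-2f_{11}f_{31})(3f_{12}^2-2f_{11}f_{13})$, so that formula \eqref{formula-invariant1} reads $\rho(h)=N/(4f_{11}^2f_{40}f_{04})$. Since for a generic hyperbonode $f_{11}\neq 0$, the sign of $\rho(h)$ equals $\sign(N)\cdot\sign(f_{40}f_{04})$. Hence $\ind(h)=\s(h)\,\sign(f_{40}f_{04})\,\sign(N)$, and the whole theorem reduces to the single claim that $\s(h)\,\sign(f_{40}f_{04})=+1$, i.e. that the parity of a hyperbonode is exactly the sign of $f_{40}f_{04}$ when the asymptotic lines are taken as coordinate axes.

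First I would establish this parity identity. With the $x$- and $y$-axes chosen as the (left and right) asymptotic lines, one has $f_{20}=f_{02}=0$ automatically, and at a hyperbonode additionally $f_{30}=f_{03}=0$ (the asymptotic lines over-osculate, i.e. have $4$-point contact). Restricting $f$ to the $x$-axis gives $f(x,0)=\tfrac{1}{4!}f_{40}x^4+O(x^5)$, so the surface lies locally on the side of the $x$-axis determined by $\sign(f_{40})$; likewise the $y$-axis sits on the side determined by $\sign(f_{04})$. The parity $\s(h)$ is $+1$ precisely when the two asymptotic lines lie on the \emph{same} side of the surface, which — after fixing a coorientation of $S$, say by the sign of $z-f(x,y)$ — translates into $\sign(f_{40})$ and $\sign(f_{04})$ coinciding, i.e. $\s(h)=\sign(f_{40}f_{04})=\sign(f_{40})\sign(f_{04})$. (One should check that this is independent of the chosen coorientation of $S$, which is automatic since flipping the coorientation flips both signs simultaneously, leaving the product unchanged — consistent with the Remark that $\s$ is orientation-independent.)

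Combining the two observations: $\ind(h)=\s(h)\,\sign(\rho(h))=\sign(f_{40})\sign(f_{04})\cdot\sign(N)\cdot\sign(f_{40})\sign(f_{04})=\sign(N)$, since $\sign(f_{40})^2\sign(f_{04})^2=1$ for a generic hyperbonode (where $f_{40}f_{04}\neq 0$, as the surface has exactly $4$-point contact with each axis — see the second Corollary above). This is exactly the asserted formula $\ind(h)=\sign\!\left(4f_{11}^2f_{40}f_{04}-(3f_{21}^2-2f_{11}f_{31})(3f_{12}^2-2f_{11}f_{13})\right)$.

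The only genuinely delicate point is the parity identity $\s(h)=\sign(f_{40}f_{04})$: it rests on correctly extracting from the definition of parity that ``both asymptotic lines on the same side of $S$'' is governed by the leading quartic coefficients of the restrictions $f(x,0)$ and $f(0,y)$, and on noting $f_{30}=f_{03}=0$ at a hyperbonode so that these leading terms are indeed the quartic ones. Everything else is the algebraic bookkeeping of signs in $\rho(h)=N/(4f_{11}^2f_{40}f_{04})$ and is routine given Theorem \ref{formula-invariant}. I would also remark that the companion statement $\ind(h)=\sign(IJ-ab)$ in the preceding theorem is then immediate by substituting the prenormal-form relation $1-ab/(IJ)=\rho$ from the first Theorem: $\sign(\rho)=\sign(IJ-ab)\sign(IJ)$ and $\s(h)=\sign(IJ)$ in those coordinates, giving $\ind(h)=\sign(IJ-ab)$.
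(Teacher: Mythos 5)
Your proposal is correct and follows essentially the same route as the paper: the paper's proof is precisely the observation that $\ind(h)=\s(h)\,\sign(\rho(h))$ combined with Theorem \ref{formula-invariant} and the identity $\s(h)=\sign(f_{40}f_{04})$, which the paper states without detail and you justify correctly via the quartic leading terms of $f(x,0)$ and $f(0,y)$ (using $f_{20}=f_{02}=f_{30}=f_{03}=0$). Your sign bookkeeping and the genericity remark $f_{40}f_{04}\neq 0$ are exactly what is needed, so the argument is complete.
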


\begin{proof}
The formula follows from Theorem\,\ref{formula-invariant} because $\s(h)=\sign(f_{40}f_{04})$. 
\end{proof}

\noindent
\textbf{Theorem \ref{formula-index}'.}
{\itshape If the asymptotic lines are the diagonals $y=\pm x$ at a hyperbonode $h$, then}
\[\ind(h)=\sign\left((f_{40}+3f_{22})(f_{04}+3f_{22})-(f_{31}+3f_{13})(f_{13}+3f_{31})\right)\,.\]

\noindent
\textit{Proof}. 
This follows from Theorem\,\ref{formula-invariant}' because 
$\s(h)=\sign\left((f_{40}+6f_{22}+f_{04})^2-16(f_{31}+f_{13})^2\right)$. 
Indeed, the parity is the sign of $\D^4_{v^+}f\cdot\D^4_{v^-}f$ at the origin, with $v^+=(1,1)$, $v^-=(1,-1)$, and 
 \[\D^4_{v^+}f=(f_{40}+6f_{22}+f_{04})+4(f_{31}+f_{13})\,, \qquad 
\D^4_{v^-}f=(f_{40}+6f_{22}+f_{04})-4(f_{31}+f_{13})\,. \eqno{\square}\]


\section{Some Remarks and a Problem}\label{remarks}

\begin{remark}
  In \cite{Kazarian-Uribe}, hyperbonodes and ellipnodes are characterised as the singular points
  of an intrinsic field of lines on the surface; then an index is defined at these points.

In the case of hyperbonodes, our index $\ind(h)$ coincides with the index of \cite{Kazarian-Uribe},  
providing a new characterisation of it. In  \cite{Kazarian-Uribe}, it was proved that\,:
\smallskip

\noindent
{\itshape The sum of indices of the hyperbonodes lying inside a connected component of the
  hyperbolic domain equals the Euler characteristic of that domain}. 
\end{remark}

\begin{remark}
In the case of ellipnodes, the index of \cite{Kazarian-Uribe} has fractional values $+1/3$ or $-1/3$. 
A {\em godron} is a parabolic point at which the unique (but double) asymptotic line is tangent
to the parabolic curve. The godrons of generic surfaces have an intrinsic index with value $+1$ or
$-1$ (cf. \cite{Dima, Uribeinvariant}).

A godron, hyperbonode or ellipnode is said to be \textit{positive} (\textit{negative}) if its index is
positive (resp. negative). The following theorem was proved in \cite{Kazarian-Uribe}\,: 

Let $H$ be a connected component of the hyperbolic domain and $E$ a connected component of the elliptic domain.
\begin{theorem*}
Write $\#e(E)$ for the number of positive ellipnodes of $E$ minus the negative ones; $\#g(E)$ for the number of
positive godrons on $\D E$ minus the negative ones; $\#h(H)$ for the number of positive hyperbonodes of $H$
minus the negative ones; and $\#g(H)$ for the number of positive godrons on $\D H$ minus the negative ones.
Then
\[\#h(H)+\#g(\D H)=3\,\chi(H) \ \quad\text{and}\quad \ \#e(E)-\#g(\D E)=3\,\chi(E)\,.\]
\end{theorem*}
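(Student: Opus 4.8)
The plan is to realise both alternating sums as the total interior index of a single intrinsic singular line field on $S$ and then apply a Poincar\'e--Hopf theorem on the compact surface with boundary $\bar H$ (resp.\ $\bar E$), whose boundary is (a union of components of) the parabolic curve $\mathcal P$. First I would recall the Kazarian--Uribe field of lines $\mathcal L$: at a point that is not a projective umbilic, one reduces the third-order projective form (the cubic form of the surface) modulo the $2$-plane of ``quadric-type'' cubics $\mathrm{II}\cdot(\text{linear})$ and takes the real directions annihilating the residue. In asymptotic coordinates at a hyperbolic point the residue is $f_{30}\,dx^{3}+f_{03}\,dy^{3}$ (the other terms of the cubic form being quadric-type), which has a single real root; so on the hyperbolic domain $\mathcal L$ is an honest line field, singular precisely where $f_{30}=f_{03}=0$, that is, at the hyperbonodes. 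On the elliptic domain the positive-definite $\mathrm{II}$ induces a conformal structure, the residue becomes a genuine cubic differential $A\,dz^{3}$, and $\mathcal L$ is the $3$-web of its horizontal directions, singular precisely where $A=0$, that is, at the ellipnodes. One checks, as asserted in the remark following Theorem~\ref{formula-index}, that the index of $\mathcal L$ at a hyperbonode is $\ind(h)$, and that at an ellipnode it is $\pm\tfrac13$ --- one third of the local degree of $A$, because the $3$-web cyclically permutes its leaves around a simple zero. Hence $\#h(H)$ and $\#e(E)$ are, up to a universal sign, the total interior index of $\mathcal L$ on $\bar H$ and on $\bar E$.

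The core is then an index identity with boundary, $\sum_{p}\ind_{p}(\mathcal L)=3\chi(M)+(\text{boundary term})$ for $M=\bar H,\bar E$. The factor $3$ is not incidental: $\mathcal L$ is everywhere the real root locus of a section of a weight-$3$ object in the cotangent direction. Over $\bar E$ this is literal --- the horizontal web of a section of $K^{\otimes3}$, whose Euler number is $3$ times that of $K$, hence $\mp3\chi(E)$ --- and over $\bar H$ it is the analogous ``cube'' built from the two asymptotic line sub-bundles $\ell_\ell,\ell_r\subset TH$. The technical point here is to pin down the correct twisted rank-$2$ real bundle --- keeping track of the density weight carried by the projective cubic form, and of the facts that $\ell_\ell,\ell_r$ need not be orientable and that their splitting degenerates along $\mathcal P$ --- and to show that its Euler number relative to the natural boundary framing equals $3\chi(H)$. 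I would either carry this out directly, or first prove the (cleaner) elliptic identity and deduce the hyperbolic one from the global decomposition $\chi(S)=\sum_{H}\chi(H)+\sum_{E}\chi(E)-\chi(\mathcal P)$ of $S$ along the parabolic curve, together with a count of godrons on $\mathcal P$.

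It remains to evaluate the boundary term by a local study of $\mathcal L$ near $\mathcal P$. Using the standard projective normal form of a generic surface at a parabolic point, and the refined one at a godron, one shows that the section defining $\mathcal L$ extends continuously up to $\mathcal P$ and contributes nothing along arcs of $\mathcal P$ free of godrons, while at each godron there is a net defect equal to $\mp\tfrac13$ times the intrinsic index ($\pm1$) of that godron. Summing and clearing the denominator yields $-\#g(\partial H)$ on $\bar H$ and $+\#g(\partial E)$ on $\bar E$; the opposite signs come from $\mathcal P$ being two-sided, so that as $\partial\bar H$ it is traversed with the co-orientation pointing into $H$, which is opposite to the one pointing into $E$.

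The hard part is precisely this boundary analysis and the accompanying sign bookkeeping: one needs a Poincar\'e--Hopf theorem valid for a \emph{multivalued} (fractional-index) field on a manifold with boundary, and the local model at a godron is delicate because there the web changes its number of leaves (three on the elliptic side, one on the hyperbolic side). The cleanest way around this is to pass to the three-fold branched cover, or to the double of $\bar H$ (resp.\ $\bar E$), on which $\mathcal L$ becomes the real root locus of an honest section of a line bundle, reducing everything to the classical relative Euler-number formula. An alternative more in the spirit of this paper is to argue by deformation: connect $S$ to a model surface through a generic $1$-parameter family and track $\#h,\#g,\chi$ across the admissible transitions; by the Propositions above and Theorem~\ref{index-invariance} the creation/annihilation and flec-hyperbonode transitions contribute nothing, so the remaining and decisive work is the analysis of the transitions occurring on $\mathcal P$ itself (births and deaths of godrons and Morse metamorphoses of the parabolic curve).
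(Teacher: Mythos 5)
This statement is not proved in the paper at all: it is quoted verbatim as a theorem of Kazarian and Uribe-Vargas, with the citation \cite{Kazarian-Uribe} serving as the proof. So there is no internal argument to compare yours against; your sketch can only be judged on its own. Its overall strategy --- the fundamental cubic form reduced modulo quadric-type terms, giving a line field on the hyperbolic domain singular at hyperbonodes and a $3$-web of a cubic differential on the elliptic domain singular at ellipnodes, followed by a Poincar\'e--Hopf count with boundary corrections along the parabolic curve --- is indeed the kind of argument the cited reference is built on, so the plan is aimed in the right direction.

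But as written it is a programme, not a proof, and the gaps are exactly at the decisive points. (i) The identification of the index of your field $\mathcal L$ at a hyperbonode with $\ind(h)$ is taken from the paper's Remark rather than established; that remark is itself a statement of \cite{Kazarian-Uribe}, so you are partially assuming what is to be proved. (ii) The Poincar\'e--Hopf theorem you invoke for a fractional-index web on a compact surface with boundary, with the boundary framing along $\mathcal P$, is never formulated or proved; ``pass to the three-fold branched cover or to the double'' is a one-line gesture at what is in fact the technical heart, especially since the web degenerates (changes number of leaves) exactly on $\mathcal P$ and at godrons. (iii) The boundary computation --- that arcs of $\mathcal P$ without godrons contribute nothing, that each godron contributes $\mp\tfrac13$ times its intrinsic index, and that the two signs in $\#h(H)+\#g(\D H)$ versus $\#e(E)-\#g(\D E)$ come out as claimed --- is only asserted (``one shows''), yet this is where all the content of the formula lies; the factor $3$ and the relative sign of the godron term cannot be checked from what you wrote. (iv) Your fallback deformation argument explicitly leaves open ``the analysis of the transitions occurring on $\mathcal P$ itself,'' which is again the whole difficulty, since Theorem\,\ref{index-invariance} only controls transitions interior to $H$. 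Until (ii) and (iii) are carried out with the local normal forms at parabolic points and godrons, the proposal does not constitute a proof of the stated identities.
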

\end{remark}

\begin{remark}
  Unfortunately, the results of the present paper and those of \cite{Kazarian-Uribe} provide no help to prove or disprove
  Ovsienko-Tabachnikov's conjecture because a compact smooth hyperbolic surface has zero Euler characteristic.  
\end{remark}

\begin{problem}
  Landis-Platonova's and Ovsienko-Tabachnikov's normal forms contain two moduli.
  Then, to determine the $4$-jet of the surface at a hyperbonode the geometric invariant $\rho$
  does not suffice. The problem is to find a second geometric invariant (in order to determine that
  $4$-jet by two geometric invariants). 
\end{problem}

\section{The Cross-Ratio Invariant for Ellipnodes}\label{sect-invariant-ellipnodes}
At an elliptic point of a generic surface there is a pair of complex conjugate asymptotic lines 
(the zeros of the quadratic form in the Monge form). 
For example, if $f(x,y)=x^2+y^2+\ldots$ the asymptotic lines at the origin are 
$L: \ y=ix$ and $\overline{L}: \y=-ix$. 
Near an ellipnode there is a corresponding pair of complex conjugate flecnodal curves (consisting of 
the complex points where an asymptotic line is also a line of zeros of the cubic term of $f$). 
The cross-ratio defined by the asymptotic lines $L_i, \overline{L}$ with 
the tangent directions of the flecnodal curves $F,\overline{F}$ is a real number:  
\[\rho(e):=\left(\,L_{\overline{F}},\, L,\, L_F,\, \overline{L}\,\right).\] 

Express the surface in Monge form $z=f(x,y)$ with quadratic part $Q=\frac{\a}{2}(x^2+y^2)$ and 
assume that (for the chosen affine coordinate system) the cubic terms of $f$ are missing. 
\begin{theorem}\label{local-index-e}
For an ellipnode $e$ of a generic smooth surface the cr-invariant is given by
\begin{equation}\label{formula-invariant1-e}
\rho(e)=\frac{4[(f_{40}-3f_{22})(f_{04}-3f_{22})-(f_{31}-3f_{13})(f_{13}-3f_{31})
]}{(f_{40}-6f_{22}+f_{04})^2+16(f_{31}-f_{13})^2}\,.
\end{equation}
{\rm (Compare this formula with formula\,\eqref{formula-invariant1'} of Theorem\,\ref{formula-invariant}'.)}
\end{theorem}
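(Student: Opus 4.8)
The plan is to deduce \eqref{formula-invariant1-e} from formula \eqref{formula-invariant1'} of Theorem~\ref{formula-invariant}' by the complex linear change of coordinates $(x,y)\mapsto(x,iy)$ (with $z\mapsto z$). This map is $\mathbb C$-linear and invertible, so on the tangent plane $z=0$ it is a linear isomorphism and hence preserves the cross-ratio of any four lines through the base point; moreover it sends the elliptic quadratic form $\frac{\a}{2}(x^2+y^2)$ to the hyperbolic form $\frac{\a}{2}(x^2-y^2)$, whose null directions are the diagonals $y=\pm x$. Since ``being a quadratic point'' and ``an asymptotic line being over-osculating'' are algebraic, projectively natural conditions on the jet of the defining function, the surface $z=g(x,y):=f(x,iy)$ has at the origin a (complex) hyperbonode with asymptotic lines $y=\pm x$, with vanishing cubic terms (because $g_{pq}=i^{\,q}f_{pq}$, so these vanish along with those of $f$), and whose two complexified flecnodal tangent lines are the images of $L_F$ and $L_{\overline{F}}$ under $(x,y)\mapsto(x,-iy)$. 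Thus Theorem~\ref{formula-invariant}' — read as a rational identity in the $4$-jet, valid also for complex coefficients — applies to $g$, and the cross-ratio it computes equals $\rho(e)$; the labelling of the two diagonals as ``left'' and ``right'' is irrelevant by the symmetry $(A,B,C,D)=(C,D,A,B)$ of the cross-ratio.

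Next I would substitute the transformed coefficients $g_{40}=f_{40}$, $g_{04}=f_{04}$, $g_{22}=-f_{22}$, $g_{31}=if_{31}$, $g_{13}=-if_{13}$ into the right-hand side of \eqref{formula-invariant1'}. In the numerator, $f_{40}+3f_{22}$ and $f_{04}+3f_{22}$ become $f_{40}-3f_{22}$ and $f_{04}-3f_{22}$, while $(g_{31}+3g_{13})(g_{13}+3g_{31})=i(f_{31}-3f_{13})\cdot(-i)(f_{13}-3f_{31})=(f_{31}-3f_{13})(f_{13}-3f_{31})$; in the denominator $(g_{40}+6g_{22}+g_{04})^2$ becomes $(f_{40}-6f_{22}+f_{04})^2$, and $-16(g_{31}+g_{13})^2=-16\,i^{2}(f_{31}-f_{13})^2=+16(f_{31}-f_{13})^2$. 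Collecting the pieces gives exactly \eqref{formula-invariant1-e}; the expression is manifestly real, as it must be, since $\rho(e)$ is a genuine real cross-ratio of $L_{\overline{F}},L,L_F,\overline{L}$. Comparing with \eqref{formula-invariant1'}, the two formulas differ precisely by $f_{22}\to-f_{22}$, $(f_{31}+3f_{13})(f_{13}+3f_{31})\to(f_{31}-3f_{13})(f_{13}-3f_{31})$ and $-16(f_{31}+f_{13})^2\to+16(f_{31}-f_{13})^2$ — exactly what the passage from $x^2-y^2$ to $x^2+y^2$ predicts.

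The computation is routine; the delicate point is the reduction itself, namely verifying that under complexification the pair of complex-conjugate flecnodal branches through the ellipnode, their tangent lines, and the conjugate asymptotic lines do correspond, under $(x,y)\mapsto(x,-iy)$, to the (real) flecnodal and asymptotic data of the hyperbolic surface $z=g(x,y)$ at the origin, and that for a generic ellipnode the four resulting lines are distinct and the denominator of \eqref{formula-invariant1-e} is nonzero. A self-contained alternative, sidestepping this bookkeeping, is to repeat the proof of Theorem~\ref{formula-invariant}' verbatim with the real diagonal directions $(1,\pm1)$ replaced throughout by the conjugate asymptotic directions $(1,\pm i)$: the tangent directions of the complexified flecnodal curves are again rational functions of the $4$-jet of $f$, given by the same formulas as in the hyperbolic case, and the cross-ratio of the four lines is evaluated by the same recipe, producing \eqref{formula-invariant1-e} after the identical simplification.
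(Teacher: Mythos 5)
Your proposal is correct, but your primary route differs from the paper's. The paper does not pass through Theorem~\ref{formula-invariant}' at all: it repeats the computation of the proof of Theorem~\ref{formula-invariant} directly with the complex conjugate asymptotic directions $(1,\pm i)$, obtaining the tangent lines to the complex conjugate flecnodal curves in the explicit form $L_F:\ ax+by=0$, $L_{\overline F}:\ \bar a x+\bar b y=0$ with $a=(f_{40}-3f_{22})+i(3f_{31}-f_{13})$, $b=(f_{31}-3f_{13})-i(f_{04}-3f_{22})$, and then evaluating the cross-ratio via the intersection points with the auxiliary line $y=-ix+2i$ (using $x_{\overline L}=\infty$); this is precisely the ``self-contained alternative'' you sketch at the end. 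Your main argument instead deduces \eqref{formula-invariant1-e} from \eqref{formula-invariant1'} by the complex-linear substitution $(x,y)\mapsto(x,iy)$, i.e.\ $g_{pq}=i^{q}f_{pq}$, and your bookkeeping of signs is correct: the substitution does turn \eqref{formula-invariant1'} into \eqref{formula-invariant1-e}, and the pair-swap symmetry of the cross-ratio disposes of the left/right labelling ambiguity. What this buys is a shorter derivation that makes the structural relation between the two formulas ($f_{22}\to -f_{22}$, the change of sign in the $16(\cdot)^2$ term) completely transparent. What it costs is the reduction step you yourself flag: Theorem~\ref{formula-invariant}' is stated for real generic surfaces, so you must read it as a rational identity in the $4$-jet valid over $\mathbb C$ and check that the complexified flecnodal/asymptotic data of $f$ at $e$ correspond under the linear map to the (formal) data of $g$. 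This is legitimate — the Lemma in Section~\ref{sect-proof-formula-invariant} is obtained by purely algebraic manipulation of the jet, and the paper's definition of the complex flecnodal curves (asymptotic direction also annihilating the cubic part) is the same algebraic condition — but in your write-up it remains an acknowledged gap rather than a verified step; either carry out that short verification or fall back on the direct computation, which is the paper's own proof.
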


As in Property\,\ref{sign-distinguishes}, the cr-invariant of an ellipnode vanishes 
if the complex conjugate flecnodal curves are tangent. Hence the sign of the cr-invariant 
distinguishes the ellipnodes that take part in a ``creation/annihilation'' transition 
{\rm (Fig.\,\ref{inv-distinguish-e})}. In fact, in the case of ellipnodes the sign of the cross-ratio 
invariant behaves as an index (the value of the index being defined up to a factor)\,:  
\smallskip

\noindent 
\textbf{\small Positive and Negative Ellipnodes}. An ellipnode is said to be \textit{positive} 
or \textit{negative} if its cross-ratio invariant is respectively positive or negative.  

\begin{corollary}\label{corollary-doule-ellipnode}
  An ellipnode is double - it is a creation/annihilation transition of two ellipnodes of opposite sign - if and only if
  \[(f_{40}-3f_{22})(f_{04}-3f_{22})-(f_{31}-3f_{13})(f_{13}-3f_{31})=0\,.\] 
\end{corollary}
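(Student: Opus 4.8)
The plan is to read the statement off the cross-ratio formula \eqref{formula-invariant1-e}, in exactly the way the two corollaries following Theorem~\ref{formula-invariant} were deduced from \eqref{formula-invariant1}: a double ellipnode is the same thing as an ellipnode at which $\rho(e)$ vanishes, and putting $\rho(e)=0$ in \eqref{formula-invariant1-e} forces the numerator to vanish.

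First I would make the identification ``double ellipnode'' $\Longleftrightarrow$ $\rho(e)=0$ precise. By definition $\rho(e)=(L_{\overline F},L,L_F,\overline L)$, and a cross-ratio of four concurrent lines vanishes precisely when its first and third entries coincide or its second and fourth do; at an ellipnode $L$ and $\overline L$ are genuinely complex conjugate and never coincide, so $\rho(e)=0$ is equivalent to $L_F=L_{\overline F}$, i.e.\ to the tangency of the complex-conjugate flecnodal curves $F$ and $\overline F$ at $e$ (compare part~a) of Property~\ref{only-local-transitions} for hyperbonodes). A real tangency point of $F$ and $\overline F$ is precisely a double ellipnode: a generic perturbation of the surface splits the double intersection point either into two real points — two ellipnodes — or into a complex-conjugate pair carrying no ellipnode, which is the creation/annihilation transition named in the statement. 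Moreover, since in a generic one-parameter family the stratum of double ellipnodes is crossed transversally, $\rho(e)$ passes through $0$ with nonzero speed, so the two ellipnodes that are born (or that die) at the transition carry cross-ratio invariants of opposite sign, hence opposite index; this justifies the ``of opposite sign'' clause, exactly as in Property~\ref{sign-distinguishes}.

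It then remains to substitute $\rho(e)=0$ into \eqref{formula-invariant1-e}. The denominator $(f_{40}-6f_{22}+f_{04})^2+16(f_{31}-f_{13})^2$ is a sum of two squares, so it vanishes only where $f_{40}-6f_{22}+f_{04}=0$ and $f_{31}-f_{13}=0$ hold simultaneously; these two conditions have codimension two and so do not meet the codimension-one stratum of double ellipnodes for a generic surface. Hence on that stratum $\rho(e)=0$ is equivalent to the vanishing of the numerator, which is the displayed equation. The only step that needs real care is the first one — confirming that the degeneration of the cross-ratio responsible for a double ellipnode is $\rho=0$ (rather than $\rho=\infty$ or $\rho=1$), and that the denominator of \eqref{formula-invariant1-e} is nonzero there; once that is settled, the rest is the routine substitution already used for the hyperbonode corollaries.
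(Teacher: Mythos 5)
Your argument is correct and is essentially the paper's own (implicit) one: the paper deduces the corollary from formula \eqref{formula-invariant1-e} together with the observation that $\rho(e)=0$ exactly at a tangency of the complex conjugate flecnodal curves (a double ellipnode) and that the denominator, being a sum of two squares, cannot vanish generically since that would impose two conditions. Your extra care about the ``opposite sign'' clause and the codimension count for the denominator matches the surrounding discussion in Section~\ref{sect-invariant-ellipnodes}, so there is nothing to correct.
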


\begin{figure}[!ht] 
\centering
\includegraphics[scale=0.42]{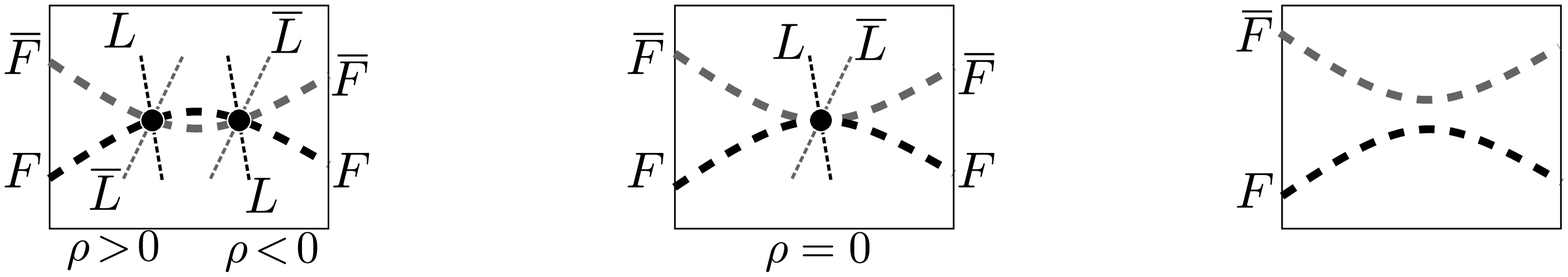}
\caption{\small``Creation/annihilation'' of $2$ ellipnodes at a tangency of $2$ complex conjugate flecnodal curves.}
\label{inv-distinguish-e}
\end{figure}

In generic $1$-parameter families of smooth surfaces, the cross-ratio invariant of an ellipnode 
never takes the value $\infty$ because the vanishing of the denominator of \eqref{formula-invariant1-e} 
would need two conditions. 
A consequence of this fact and of Corollary\,\ref{corollary-doule-ellipnode} is the following theorem\,: 

\begin{theorem}\label{ellipnodes-index-invariance}
The algebraic sum of the ellipnodes inside a connected component $E$ of the elliptic
domain remains constant in a continuous $1$-parameter family of surfaces, provided that the topological
type of $E$ does not change.
\end{theorem}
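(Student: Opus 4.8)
The plan is to run the same argument as in the proof of Theorem~\ref{index-invariance}, with the list of interior transitions of hyperbonodes (Property~\ref{only-local-transitions}) replaced by the corresponding list for ellipnodes. Given a continuous $1$-parameter family $S_t$ of surfaces, the component $E_t$ of the elliptic domain varies continuously and, by hypothesis, without change of topological type; for all but finitely many parameter values $t$ the surface $S_t$ is generic, so that every ellipnode of $E_t$ is ordinary, with cr-invariant $\rho(e)\neq 0,\infty$, hence of well-defined sign. The algebraic sum $\#e(E_t)$ of the ellipnodes of $E_t$ (positive minus negative) is then constant on each interval between consecutive exceptional parameter values, and the theorem reduces to showing that it does not jump across an exceptional value and that no ellipnode escapes through $\partial E_t$.

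For the interior I would use Theorem~\ref{local-index-e}: the sign of $\rho(e)$ equals the sign of the numerator of~\eqref{formula-invariant1-e}, so the only ways it can degenerate are the vanishing of that numerator or a pole of $\rho(e)$. By Corollary~\ref{corollary-doule-ellipnode} the vanishing of the numerator is exactly a \emph{double ellipnode}, i.e.\ a tangency of the two complex conjugate flecnodal curves $F,\overline{F}$, which is a creation/annihilation of two ellipnodes of opposite sign; across such a transition a pair of opposite sign is born or dies together, so $\#e(E_t)$ is unchanged. A pole would require the denominator of~\eqref{formula-invariant1-e} to vanish; being the sum of the two real squares $(f_{40}-6f_{22}+f_{04})^2$ and $16(f_{31}-f_{13})^2$, it vanishes only when $f_{40}-6f_{22}+f_{04}=0$ and $f_{31}-f_{13}=0$ simultaneously — two independent conditions — so $\rho(e)=\infty$ does not occur in a generic $1$-parameter family. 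Thus the creation/annihilation of a pair of opposite sign is the only interior transition, and $\#e(E_t)$ survives it. For the boundary, an ellipnode lies strictly inside $E_t$ for generic $t$ and can approach $\partial E_t$ only when the intersection of $F$ and $\overline{F}$ meets the parabolic curve, which is precisely the transition at which an ellipnode is replaced by a hyperbonode; by the metamorphosis results recalled in the introduction (\cite{Uribeevolution}) this is accompanied by a Morse metamorphosis of the parabolic curve, hence by a change of the topological type of $E$, which is excluded by hypothesis. Therefore no ellipnode crosses $\partial E_t$, and $\#e(E_t)$ is constant throughout the family.

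The main obstacle I anticipate is the honest transversality/codimension bookkeeping underlying the second paragraph, namely the proof that the three phenomena listed — tangency of $F$ and $\overline{F}$, a pole of $\rho(e)$, and an ellipnode reaching $\partial E$ — really exhaust the codimension-one events that can alter $\#e(E)$ in a generic $1$-parameter family. One must rule out in codimension one, for instance, a collision of two ellipnodes of the \emph{same} sign or an ellipnode merging with a complex biflecnode; this is the elliptic-domain analogue of Property~\ref{only-local-transitions}, and it is the step where the genericity argument has to be carried out with care.
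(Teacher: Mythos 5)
Your proposal is correct and takes essentially the same route as the paper: the paper's own proof is a one-line appeal to the facts established just before the theorem, namely that $\rho(e)=\infty$ cannot occur in a generic $1$-parameter family (the denominator of \eqref{formula-invariant1-e} is a sum of two squares, hence its vanishing needs two conditions) and that the vanishing of the numerator (Corollary\,\ref{corollary-doule-ellipnode}) is precisely a creation/annihilation of two ellipnodes of opposite sign, so the only interior transition preserves the algebraic sum. The transversality bookkeeping you flag as an anticipated obstacle is not carried out in the paper either — it simply asserts the exhaustiveness of this list of local transitions — and your extra boundary argument (an ellipnode reaching $\D E$ forces a change of topological type) is a reasonable supplement consistent with the metamorphosis results of \cite{Uribeevolution}.
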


\begin{proof}
The only local transition that involves ellipnodes, occurring in the interior of the elliptic domain, is
the creation/annihilation of two ellipnodes of opposite sign.
\end {proof}


\section{Proof of Theorem\,\ref{formula-invariant}}\label{sect-proof-formula-invariant}

We take the asymptotic lines at $h$ as coordinate axes, that is, $f_{20}=f_{02}=0$, and  
we use the fact that $h$ is a hyperbonode, that is, $f_{30}=f_{03}=0$.

\begin{lemma*}[proved below]
The tangent lines to the right and left flecnodal curves at $h$ have the respective equations 
\[2f_{11}f_{40}\,x-(3f_{21}^2-2f_{11}f_{31})\,y\,=\,0\,. \eqno{(L_{F_r})}\]
\[-(3f_{12}^2-2f_{11}f_{13})\,x+2f_{11}f_{04}\,y\,=\,0\,. \eqno{(L_{F_\ell})}\]
\end{lemma*}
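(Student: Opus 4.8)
The plan is to describe the two branches of the flecnodal curve near $h$ explicitly and to read off the linear part of their defining equations. First I would recall the contact characterisation: for a point $p=(x_0,y_0,f(x_0,y_0))$ of the surface, the straight line through $p$ with tangent slope $t=dy/dx$ has at least $4$-point contact with the surface if and only if the two binary forms
\[\Phi_2(t;x_0,y_0):=f_{20}(x_0,y_0)+2t\,f_{11}(x_0,y_0)+t^2f_{02}(x_0,y_0),\]
\[\Phi_3(t;x_0,y_0):=f_{30}(x_0,y_0)+3t\,f_{21}(x_0,y_0)+3t^2f_{12}(x_0,y_0)+t^3f_{03}(x_0,y_0)\]
both vanish; indeed these are, up to constants, the values $h''(0)$ and $h'''(0)$ of the contact function $h(s)=f(x_0+s,y_0+ts)-f(x_0,y_0)-(f_{10}+t\,f_{01})(x_0,y_0)\,s$, the first being the asymptotic condition and the second the over-osculation condition. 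Since at $h$ we have chosen $L_r$ (the right asymptotic line) to be the $x$-axis, the right flecnodal curve $F_r$ is, near $h$, the locus obtained by following the root $t=t(x,y)$ of $\Phi_2=0$ with $t(0,0)=0$ and imposing $G(x,y):=\Phi_3(t(x,y);x,y)=0$; the left flecnodal curve $F_\ell$ is obtained symmetrically from the root near $\infty$, i.e. from the slope $s=dx/dy$ near $0$.

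Next, using the normalisation $f_{20}=f_{02}=f_{30}=f_{03}=0$, I would compute the $1$-jet of $t(x,y)$ by the implicit function theorem applied to $\Phi_2=0$: because $\partial_t\Phi_2|_0=2f_{11}\neq0$, while $\partial_x\Phi_2|_0=f_{30}=0$ and $\partial_y\Phi_2|_0=f_{21}$, one gets $t(0,0)=0$, $t_x(0,0)=0$ and $t_y(0,0)=-f_{21}/(2f_{11})$. Then $G(0,0)=\Phi_3(0;0,0)=f_{30}=0$, and by the chain rule every term of $\partial\Phi_3$ carrying a factor $t$ drops at the origin, so $G_x(0,0)=\partial_t\Phi_3|_0\cdot t_x(0,0)+\partial_x\Phi_3|_0=3f_{21}\cdot0+f_{40}=f_{40}$ and $G_y(0,0)=\partial_t\Phi_3|_0\cdot t_y(0,0)+\partial_y\Phi_3|_0=3f_{21}\cdot(-f_{21}/(2f_{11}))+f_{31}=f_{31}-\tfrac{3f_{21}^2}{2f_{11}}$. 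Hence the tangent line to $F_r$ at $h$ is $f_{40}\,x+\bigl(f_{31}-\tfrac{3f_{21}^2}{2f_{11}}\bigr)y=0$, which after multiplication by $2f_{11}$ is exactly $(L_{F_r})$. The equation $(L_{F_\ell})$ follows from the identical computation carried out with $s=dx/dy$ in place of $t=dy/dx$, which amounts to exchanging $x\leftrightarrow y$ (hence $f_{ij}\leftrightarrow f_{ji}$, with $f_{11}$ fixed).

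Finally I would check that this root-tracking is legitimate: at a hyperbonode the curves $F_r,F_\ell$ are genuine smooth arcs meeting transversally, and $\nabla G(0,0)=(f_{40},\,f_{31}-\tfrac{3f_{21}^2}{2f_{11}})$ is nonzero for a generic $h$ (its vanishing forces $f_{40}=0$, the biflecnode condition, in which case $(L_{F_r})$ correctly degenerates to the line $y=0$, i.e. to $L_r$ itself). The only genuinely delicate point is the bookkeeping in the second step — correctly identifying which terms of the chain-rule expansion of $G$ survive at the origin once $f_{20}=f_{02}=f_{30}=f_{03}=0$ and $t(0,0)=0$ are used — but this is a short finite computation rather than a conceptual obstacle.
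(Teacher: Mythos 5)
Your proof is correct, and at its computational core it is the same linearization as the paper's, but you reach the two defining equations by a more elementary, self-contained route. The paper identifies the flecnodal curve with the projection of $\mathcal A\cap\mathcal I\subset J^1(\R,\R)$, where $\mathcal A=\{a^f=0\}$ is the surface of the asymptotic IDE and $\mathcal I=\{a^f_x+p\,a^f_y=0\}$, quoting \cite{Uribetesis, Uribeevolution} for that identification, and then intersects the tangent planes of $\mathcal A$ and $\mathcal I$ at the origin and projects along $p$. Your forms $\Phi_2$ and $\Phi_3$ are exactly $a^f$ and $a^f_x+p\,a^f_y$ (with $t=p$), so your contact-function computation re-derives directly from the definition of the flecnodal curve (locus of at least $4$-point contact of an asymptotic line) precisely the input the paper cites from earlier work; and solving $\Phi_2=0$ for $t(x,y)$ by the implicit function theorem and linearizing $G=\Phi_3(t(x,y);x,y)$ is equivalent to intersecting the two tangent planes. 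What your route buys is independence from the jet-space/Legendrian machinery; what the paper's route buys is a framework that also carries the global information about the flecnodal curve used elsewhere. Two remarks: your labelling of the branch with $t(0,0)=0$ as $F_r$ agrees with the paper's convention (the right asymptotic line is the $x$-axis, as one sees from $x_{L_r}=\infty$ in the cross-ratio computation); and your coefficient $\partial_t\Phi_3|_{\bar 0}=3f_{21}$ is the correct one --- the paper's displayed differential $dI|_{\bar 0}=f_{40}dx+f_{31}dy+2f_{21}dp$ contains a slip ($2f_{21}$ should be $3f_{21}$), and only with $3f_{21}$ does one recover the stated equation $(L_{F_r})$, exactly as your chain-rule computation does. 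You use implicitly that the tangent line of $F_r$ at $h$ equals that of its projection to the $xy$-plane; since the tangent plane at $h$ is $z=0$ this is immediate, and it is the same observation the paper makes about $F$ and $\widehat F$.
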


\noindent 
\textit{Proof of Theorem} \ref{formula-invariant}. 
  To compute $\rho(h)$, we shall use the intersection points of the lines $L_{F_\ell}$, $L_r$, $L_{F_r}$, $L_\ell$
  with the line of equation $y=1$, which is parallel to $L_r$.
  The $x$-coordinates of the respective intersection points are (by the Lemma)\,:
\[x_{F_\ell}=\frac{2f_{11}f_{04}}{(3f_{12}^2-2f_{11}f_{13})}; \qquad x_{L_r}=\infty; \qquad
x_{F_r}=\frac{(3f_{21}^2-2f_{11}f_{31})}{2f_{11}f_{40}}; \qquad x_{L_\ell}=0\,.\]

We have that $\rho(h)=(x_{F_\ell}-x_{F_r})/(x_{F_\ell}-x_{L_\ell})$, because $x_{L_r}=\infty$. Therefore, 
\[\rho(h)=1-\frac{(3f_{21}^2-2f_{11}f_{31})(3f_{12}^2-2f_{11}f_{13})}{4f_{11}^2f_{40}f_{04}}\,. \eqno{\square} \]

Before to prove the Lemma, we very briefly explain the geometry that justify the needed calculations.
For more a complete explanation see \cite{Uribeevolution}. 

\subsection{Flecnodal Curve and Geometry in the Space of $1$-Jets}

Assume our surface $S$ is locally given in Monge form $z=f(x,y)$, where our hyperbonode $h$
is the origin. 
At each point of $S$ the pair of asymptotic lines is determined by the kernel
of the second fundamental form of $S$.
This means that each solution of the implicit differential equation (IDE)
\begin{equation}\label{asymptote-equation}
  f_{20}(x,y)dx^2+2f_{11}(x,y)dxdy+f_{02}(x,y)dy^2=0\,.
\end{equation}
is the image of an asymptotic curve by the projection $(x,y,z)\to(x,y)$. 
Moreover, {\itshape the image of the flecnodal curve $F$ under the same projection $(x,y,z)\to(x,y)$ is
  the curve $\widehat{F}$ formed by the inflections of the solutions of} \eqref{asymptote-equation}
(cf. \cite{Uribetesis, Uribeevolution}).

Since our hyperbonode $h$ is the origin of the $xy$-plane, which is the tangent plane to $S$
at $h$, the curves $F$ and $\widehat{F}$ have the same tangent lines.
Therefore, we only need to find the tangent lines to the left and right branches
of $\widehat{F}$. 

Putting $p=dy/dx$, the IDE \eqref{asymptote-equation} is ``materialised'' by the surface
$\mathcal A\subset J^1(\R,\R)$ of equation 
\[a^f(x,y,p):=f_{20}(x,y)+2f_{11}(x,y)p+f_{02}(x,y)p^2=0\,.\]
{\footnotesize
\begin{remark*}
  The surface $\A$ does not take into account the asymptotic lines parallel to the $y$ direction
  because this direction corresponds to $p=\infty$.   
\end{remark*} }

The space of $1$-jets $J^1(\R,\R)$ (with coordinates $x,y,p$) has a natural \textit{contact structure}:
it is the field of \textit{contact planes} on $J^1(\R,\R)$ which are the kernels of the
$1$-form $\a=dy-pdx$.  At each generic point of $\A$ the contact plane and the tangent pane to $\A$
determine a \textit{characteristic line} tangent to $\A$. 

The surface $\A$ is foliated by the integral curves of the characteristic lines of $\A$,
whose projections to the $xy$-plane are the solutions of the IDE $a^f(x,y,p)=0$.

In \cite{Uribetesis, Uribeevolution}, it is proved that the curve of inflections $\widehat{F}$
is the image by the natural projection $\pi:\A\subset J^1(\R,\R)\to\R^2$, $(x,y,p)\mapsto (x,y)$, of the 
curve $\mathcal{F}\subset\A$ formed by the fold points of the Legendre dual projection 
$\pi^\vee:\A\ni J^1(\R,\R)\mapsto(p,px-y)$.
That is, $\mathcal{F}\subset\A$ is the apparent contour of $\A$ by the projection $\pi^\vee:\A\to(\R^2)^\vee$

According to \cite{Uribetesis, Uribeevolution}, the apparent contour $\mathcal{F}\subset\A$
is the  intersection of $\mathcal A$ with the surface $\mathcal I\subset J^1(\R,\R)$ given
locally by the equation 
\[I(x,y,p):=a^f_x(x,y,p)+pa^f_y(x,y,p)=0\,.\]

Therefore, the ``curve of inflections'' $\widehat{F}$ is (locally) the projection to the $xy$-plane,
along the $p$-direction, of the intersection $\A\cap\mathcal I$. Now we can prove the Lemma.

\subsection{Proof of the Lemma}

The right flecnodal curve is locally obtained from the intersection $\A\cap\mathcal I$ near the origin.
Therefore, the tangent line to the right flecnodal curve at $h$ is the projection of the intersection
line of the tangent planes to the surfaces $\mathcal A$ and $\mathcal I$ at $(0,0,0)$.
These planes are provided by the differentials of $a^f$ and $I$
at the origin:
\[da^f_{|_{\bar{0}}}=f_{30}dx+f_{21}dy+2f_{11}dp \qquad \text{and} \qquad dI_{|_{\bar{0}}}=f_{40}dx+f_{31}dy+2f_{21}dp\,.\]

Using the condition $f_{30}=0$, we get the equations of the tangent planes\,:
\[f_{21}y+2f_{11}p=0  \qquad \text{and} \qquad f_{40}x+f_{31}y+2f_{21}p=0\,.\] 

Then the tangent line $L_{F_r}$ to the right flecnodal curve at the origin has equation 
\[L_{F_r}: \qquad 2f_{11}f_{40}\,x-(3f_{21}^2-2f_{11}f_{31})\,y\,=\,0\,.\]

To get the tangent line to left flecnodal curve at the origin, in which the $y$ axis is the left
asymptotic line, we make similar computations with the equations
\[\hat{a}^f(x,y,q):=f_{20}q^2+2f_{11}q+f_{02}=0 \qquad \text{and} \qquad \hat{a}^f_y(x,y,q)+q\hat{a}^f_x(x,y,q)=0.\]
Then we get the equation of the tangent line $L_{F_\ell}$to the left flecnodal curve\,:
\[L_{F_\ell}: \qquad 2f_{11}f_{04}\,y-(3f_{12}^2-2f_{11}f_{13})\,x\,=\,0\,. \eqno{\square}\]

\paragraph{On the Proof of Theorems\,\ref{formula-invariant}' and \ref{local-index-e}.}
The proofs of the Theorems\,\ref{formula-invariant}' and \ref{local-index-e} follow exactly the same 
pattern of the proof of Theorem\,\ref{formula-invariant}. 
Moreover, in Theorems\,\ref{formula-invariant}' and \ref{local-index-e}, the Monge forms differ only 
in one sign of the quadratic term (being respectively $x^2-y^2$ and $x^2+y^2$), 
so that the calculations involved in their proofs are almost identical. 

To compute the cross-ratio in the elliptic case (Theorem\,\ref{local-index-e}),  we take the tangent 
plane to $S$ at an ellipnode $e$ where we find the points of intersection of the line of equation $y=-ix+2i$ 
with the (complex conjugate) asymptotic lines ($L$: $\,ix-y=0$\,; \,$\overline{L}$: $\,ix+y=0$;) and with the 
tangent lines to the (complex conjugate) flecnodal curves 
($L_F$: $\,ax+by=0$\,; \,$L_{\overline{F}}$: $\,\bar{a}x+\bar{b}y=0$;), 
where $a=(f_{40}-3f_{22})+i(3f_{31}-f_{13})$ and $b=(f_{31}-3f_{13})-i(f_{04}-3f_{22})$. 

The $x$-coordinates of those four intersection points are 
\[x_{L}=1; \qquad x_{\overline{L}}=\infty; \qquad
x_{F}=\frac{2ib}{bi-a}; \qquad x_{\overline{F}}=\frac{2i\bar{b}}{\bar{b}i-\bar{a}}\,.\]

Hence $\,\rho(e)=(x_{F}-x_{\overline{F}})/(x_{F}-x_{L})$ because $x_{\overline{L}}=\infty$. Therefore, 
\[\rho(e)=\frac{4[(f_{40}-3f_{22})(f_{04}-3f_{22})-(f_{31}-3f_{13})(f_{13}-3f_{31})
]}{(f_{40}-6f_{22}+f_{04})^2+16(f_{31}-f_{13})^2}\,. \eqno{\square} \]


{\footnotesize 

}
\bigskip

\noindent
{\scshape Ricardo Uribe-Vargas, \\
Institut de Math\'ematiques de Bourgogne, UMR 5584 CNRS. \\
Universit\'e Bourgogne Franche-Comt\'e, F-21000 Dijon, France}. \\
r.uribe-vargas@u-bourgogne.fr


\begin{thebibliography}{99}

\bibitem {Arnold-RPCS} \textbf{Arnold V.I.},
\textit{``Remarks on the Parabolic Curves on Surfaces and on the
    Higher-Dimensional M\"obius-Sturm Theory,''} Funct. Anal. Appl. \textbf{31} (1997) 227-239.

\bibitem {Arnold-TPTAC} \textbf{Arnold V.I.},
  \textit{``Topological Problems of the Theory of Asymptotic Curves,''}
  Proc. Steklov Inst. Math. \textbf{225} (1999) 5-15.
  
\bibitem {Cayley} \textbf{Cayley A.}, 
\textit{On the singularities of surfaces}, Camb. Dublin Math. J.\,\textbf{7} (1852), 166-171.
  
\bibitem {Chekanov-Pushkar} \textbf{Chekanov Yu. V.}, \textbf{Pushkar P. E.}, 
  \textit{``Combinatorics of Fronts of Legendrian Links and the Arnol’d 4-Conjectures,''}
  Russ. Math. Surv. \textbf{60} (2005) 95-149.
  
\bibitem {Garcia-Freitas} \textbf{Freitas B.}, \textbf{Garcia R.}, 
\textit{Inflection points of hyperbolic tori of $\sph^3$}, 
Quart. J. Math. 00 (2018), 1-20; doi:10.1093/qmath/hax058

\bibitem {Goryunov84} \textbf{Goryunov V. V.},
  \textit{Singularities of projections of complete intersections}.
  J. Sov. Math. \textbf{27} (1984) 2785-2811. 

\bibitem {Kazarian-Uribe} \textbf{Kazarian M.}, \textbf{Uribe-Vargas R.},
  \textit{Characteristic Points, Fundamental Cubic Form
    and Euler Characteristic of Projective Surfaces}. Preprint 2013. 
To appear in Moscow Mathematical Journal. 

\bibitem {Landis} \textbf{Landis E.E.}, {\em Tangential singularities},
Funct. Anal. Appl. \textbf{15}:2 (1981), 103-114. 


\bibitem {Ovsienko-Tabachnikov} \textbf{Ovsienko V.}, \textbf{Tabachnikov S.}, 
  \textit{Hyperbolic Carathéodory Conjecture}. Proc. of Steklov Inst. of Mathematics,
  \textbf{258} (2007) 178-193.

 \bibitem {Dima} \textbf{Panov D.A.}, {\em Special Points of Surfaces in the 
Three-Dimensional Projective Space}, 
  Funct. Anal. Appl. \textbf{34}:4 (2000) 276-287.  

\bibitem {Platonova} \textbf{Platonova O.A.}, {\em Singularities of the mutual position 
of a surface and a line}, Russ. Math. Surv. \textbf{36}:1 (1981) 248-249. Zbl.458.14014.

\bibitem {Salmon} \textbf{Salmon G.}, {\em A Treatise in Analytic 
    Geometry of Three Dimensions}, Chelsea Publ. (1927).

\bibitem {Toru} \textbf{Sano H.}, \textbf{Kabata Y.}, \textbf{Deolindo Silva J.}, \textbf{Ohmoto T.},
  \textit{Classification of Jets of Surfaces in Projective $3$-Space Via Central Projection},
  Bull. Math. Braz. Soc. (2017), DOI 10.1007/s00574-017-0036-x. 
  
\bibitem {Uribetesis} \textbf{Uribe-Vargas R.}, {\em Singularit\'es symplectiques et de 
contact en g\'eom\'etrie diff\'erentielle des courbes et des surfaces}, 
  PhD. Thesis, Universit\'e Paris 7, 2001, (In English).

\bibitem {Uribeevolution} \textbf{Uribe-Vargas R.}, \textit{Surface Evolution, Implicit 
Differential Equations and Pairs of Legendrian Fibrations}, Preprint (2002). 
An improved version has been submitted for publication (2019). 

\bibitem {Uribeinvariant} \textbf{Uribe-Vargas R.}, \textit{A Projective Invariant for
Swallowtails and Godrons, and Global Theorems on the Flecnodal Curve}, 
  Moscow Math. Jour. \textbf{6}:4 (2006) 731-772.

\bibitem {Wilczynski} \textbf{Wilczynski E.\,J.},
  \textit{``Projective Differential Geometry of Curved Surfaces. I-V,''} Trans. Am. Math.
  Soc. \textbf{8} (1907) 233-260; \textbf{9} (1908) 79-120, 293-315; \textbf{10}, 176-200, (1909) 279-296.

  
\end{thebibliography}
\end{document}